\documentclass{article}
\usepackage{graphicx} 
\usepackage{amsmath}
\usepackage{amsthm}
\usepackage{amssymb}
\usepackage[dvipdfm, bookmarks, colorlinks, breaklinks, pdftitle={Gonzalo Fiz Pontiveros - AFR Proposal},pdfauthor={Gonzalo Fiz Pontiveros}]{hyperref}  
\hypersetup{linkcolor=blue,citecolor=blue,filecolor=black,urlcolor=blue} 

\numberwithin{equation}{section} 

\theoremstyle{plain}
\newtheorem{corollary}{Corollary}[section]
\newtheorem{proposition}{Proposition}[section]
\newtheorem{theorem}{Theorem}[section]
\newtheorem{lemma}{Lemma}[section]

\theoremstyle{definition}
\newtheorem{definition}{Definition}[section]

\theoremstyle{remark}
\newtheorem{obs}{Observation}[section]
\newtheorem*{claim}{Claim}
\newtheorem{remark}{Remark}

\newcommand{\Z}[1]{\mathbb{Z}_{#1}}
\newcommand{\prob}[1]{\mathbb{P}(#1)}
\newcommand{\exn}[2]{\mathbb{E}_{#1} \left ( #2 \right )} 
\newcommand{\case}[1]{\emph{Case #1},}

\begin{document}

\title{Freiman rank of random subsets of $\Z{N}$}
\author{Gonzalo Fiz Pontiveros}
\date{\today}

\maketitle
\begin{abstract}
Let $A$ be a random subset of $\mathbb{Z}_{N}$ obtained by including each element of $\mathbb{Z}_{N}$ in $A$ independently with probability $p$. We say that $A$ is \emph{linear} if the only Freiman homomorphisms are given by the restrictions of    functions of the form $f(x)= ax+b$. For which values of $p$ do we have that  $A$ is linear with high probability as $N\to\infty$ ? 

First, we establish a geometric characterisation of linear subsets. Second, we show that if $p=o(N^{-2/3})$ then $A$ is not linear with high probability whereas if $p=N^{-1/2+\epsilon}$ for any $\epsilon>0$ then $A$ is linear with high probability.  \end{abstract}

\section{Introduction}

Freiman's structure theory of set addition constitutes 
now one of the most general and powerful tools in additive combinatorial number theory. The essential concept of this theory is what is now known as  \emph{Freiman homomorphism} in the literature. \\

\noindent Let $A\subseteq \Z{N}$ and let $\phi:A \to \Z{N}$ be some function.\\

\begin{definition}
 \emph{We say that $f$ is a Freiman homomorphism if whenever a quadruple $a,b,c,d \in A$ satisfies $a-b=c-d$ then $\phi(a)-\phi(b)=\phi(c)-\phi(d)$.}	\\
 \end{definition}

 Clearly if $f:\Z{N}\to \Z{N}$ is of the form $f(x)= ax+b$, that is the translate of a group homomorphism, then the restriction $f|_{A}$ is a Freiman homomorphism. We will refer to functions of the above form as \emph{linear}.\\
 
 For the sake of simplicity assume now that $N$ is a  prime. Then it is easy to see that the space of Freiman homomorphisms from $A$ to $\Z{N}$, denoted $\text{Hom}_{F}(A,\Z{N})$, is a vector space over the field $\mathbb{F}\cong \Z{N}$.We consider the notion of \emph{Freiman rank or Freiman dimension} of $A$: 

 $$rank(A)=\dim{\text{Hom}_{F}(A,\Z{N})}-1$$

Observe that $1\leq rank(A)\leq |A|-1$. The intuition here is that the size of the rank should give some form of measure of the additive structure of $A$. For example if $A$ is an arithmetic progression i.e $A=\{a_{0}+ i\cdot d\; :\; 0\leq i\leq l-1\}$ then the only Freiman homomorphisms are given by the restrictions of linear functions to $A$ and hence $rank(A)=1$. If $A\subseteq \Z{N}$ has Freiman rank $1$ we say that the set $A$  is \emph{linear}.\\

 On the other side of the additive spectrum, we could pick $A$ to be a \emph{Sidon set}; that is a set where the only quadruples $(a,b,c,d) \in A^{4}$ such that $a+b=c+d$ are the trivial ones. The classical example of a Sidon set is the set $\{1,2,4,\ldots, 2^{k}: k< \log_{2}{(N/2)}\}$. In this case the restrictions for a function to be a Freiman homomorphism are essentially empty so in fact any function $\phi: A\to \Z{N}$ is a Freiman homomorphism and therefore the Freiman rank is as large as it can be, namely $rank(A)=|A|-1$.\\

It is possible to extend the definition of Freiman rank for $N$ not prime or indeed any abelian group. However we will be only considering sets of Freiman rank 1 for which we can give a simple independent definition.\\

\begin{definition}\emph{ We say that a set $A\subseteq \Z{N}$ is linear if and only if the only Freiman homomorphism are given by the restrictions of linear functions}.\\
\end{definition}
 
\noindent We will now state the main result of this paper:
\begin{theorem}
\label{Main}
For any $\epsilon >0$, let $A$ be a random subset of $\Z{N}$ where each $x \in \Z{N}$ is chosen independently with probability $p=N^{-\frac{1}{2} + \epsilon}$. Then with high probability $A$ is a linear set. Furthermore if $p=o(N^{-2/3})$ then with high probability we may find non trivial Freiman homomorphisms on $A$.
\end{theorem}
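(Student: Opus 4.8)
The overall plan is to view $\text{Hom}_{F}(A,\Z{N})$ as the kernel of the linear system over $\mathbb{F}\cong\Z{N}$ whose unknowns are the values $\{\phi(a)\}_{a\in A}$ and whose equations are $\phi(a)+\phi(d)=\phi(b)+\phi(c)$, one for each additive quadruple $a+d=b+c$ in $A$ (the degenerate ones $a+d=2b$ included). This kernel always contains the two-dimensional space spanned by the constant function and the inclusion $a\mapsto a$, so $A$ is linear exactly when this system has rank $|A|-2$; the geometric characterisation of the previous section recasts that condition in combinatorial terms, and it is that form which we verify in each regime. For $p=o(N^{-2/3})$ we exhibit, with high probability, a third independent solution, while for $p=N^{-1/2+\epsilon}$ we show the solution space remains two-dimensional.

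\emph{Non-linearity for $p=o(N^{-2/3})$.} Assume $pN\to\infty$, as otherwise $A$ has bounded size and the assertion is vacuous. Call $a\in A$ \emph{isolated} if every solution of $a+d=b+c$ with $b,c,d\in A$ is trivial, meaning $\{a,d\}=\{b,c\}$ as multisets. If $a$ is isolated, let $\phi$ be the identity on $A\setminus\{a\}$ and set $\phi(a)=v$ for any $v\neq a$: every additive quadruple of $A$ either avoids $a$ or meets it only in a trivially matched position, so both sides of the corresponding equation change by the same amount and $\phi$ is a Freiman homomorphism; since any linear function agreeing with the identity on the at least two points of $A\setminus\{a\}$ is the identity, $\phi$ is non-linear. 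It therefore suffices to show that $A$ contains an isolated point with high probability. For a fixed $a$ the expected number of non-trivial quadruples through $a$, given $a\in A$, is $O(N^{2}p^{3}+Np^{2})=o(1)$ precisely because $p=o(N^{-2/3})$; hence the expected number of isolated points is $(1-o(1))pN\to\infty$, and a routine second moment computation (the events ``$a$ isolated'' and ``$a'$ isolated'' are asymptotically independent) yields an isolated point with high probability.

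\emph{Linearity for $p=N^{-1/2+\epsilon}$.} Here $|A|=N^{1/2+\epsilon+o(1)}$ with $|A|^{2}/N\geq N^{2\epsilon}\to\infty$, so by a concentration inequality and a union bound, with high probability every nonzero $z\in\Z{N}$ is a difference $a-a'$ of elements of $A$ in at least $\tfrac12N^{2\epsilon}$ ways; in particular $A-A=\Z{N}$. Given a Freiman homomorphism $\phi$ one may then define $g:\Z{N}\to\Z{N}$ by $g(a-a')=\phi(a)-\phi(a')$, which is well defined by the Freiman property and satisfies $g(0)=0$ and $g(u)+g(v)=g(u+v)$ whenever $(u,v)$ is \emph{linkable}, i.e.\ $A\cap(A-u)\cap(A+v)\neq\emptyset$: choosing $y$ in this set, $g(u)=\phi(y+u)-\phi(y)$ and $g(v)=\phi(y)-\phi(y-v)$ add to $g(u+v)$. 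If every such $g$ is forced to be a genuine homomorphism then $g(x)=\lambda x$ for some $\lambda$ (as $N$ is prime), so $\phi(a)=\phi(a')+\lambda(a-a')$ is linear and $A$ is linear; thus the whole problem reduces to the deterministic statement that, with high probability over $A$, every $g$ with $g(0)=0$ additive on all linkable pairs is linear. To establish this I would bootstrap the additivity relations: from $g(u_{1}+u_{2})=g(u_{1})+g(u_{2})$, $g((u_{1}+u_{2})+u_{3})=g(u_{1}+u_{2})+g(u_{3})$ and $g(u_{1}+(u_{2}+u_{3}))=g(u_{1})+g(u_{2}+u_{3})$ one deduces the fresh relation $g(u_{2}+u_{3})=g(u_{2})+g(u_{3})$. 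Taking $\mathcal{R}_{0}$ to be the set of linkable pairs and closing under this rule gives $\mathcal{R}_{0}\subseteq\mathcal{R}_{1}\subseteq\cdots$ with $g$ additive on every pair in $\bigcup_{k}\mathcal{R}_{k}$; a second moment estimate carried out round by round — a fixed $u$ is linkable to the set $(A\cap(A-u))-A$, of size $N^{1/2+3\epsilon+o(1)}$, and one checks that one round of closure multiplies the relevant density by a fixed positive power of $N$ — should show that with high probability $\bigcup_{k}\mathcal{R}_{k}=\Z{N}\times\Z{N}$ after $O(1/\epsilon)$ rounds, so $g$ is a homomorphism.

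The main obstacle is exactly this closure step: it must be run uniformly over the worst-case function $g$, and one has to control the correlations between the events ``$(u,v)$ is linkable'' that arise once earlier rounds are conditioned upon. It is around $p\approx N^{-1/2}$ that the difference structure of $A$ first becomes rich enough for $\mathcal{R}_{0}$ to be dense enough to seed this iteration; for $p$ appreciably below $N^{-1/2}$ the argument degrades, which is consistent with the gap between the two thresholds in the statement.
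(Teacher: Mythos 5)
Your lower bound is essentially the paper's argument: both identify an element of $A$ (your ``isolated'' $a$, the paper's $x_0$) involved in no non-trivial additive quadruple, perturb a linear function at that one element to produce a non-linear Freiman homomorphism, and use a first-moment count of quadruples to show such an element exists with high probability when $p=o(N^{-2/3})$. The paper avoids the second-moment step you invoke by simply bounding the total number of non-trivial quadruples $X$ via Markov ($\mathbb{E}X\le N^3p^4$) and comparing to $|A|\sim Np$; either route works. Your deterministic reduction for the upper bound is also in substance the paper's: your ``linkable'' pairs are the paper's ``additive'' pairs, and the deduction rule you state (three relations imply a fourth) is exactly what one gets from subdividing a triangle into three in the paper's cell complex $\mathcal{C}_A$, so closing under it corresponds to showing the first homology is trivial.

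The genuine gap is the probabilistic step for the upper bound, and you flag it yourself. Running the closure ``round by round'' with a second-moment estimate at each stage does not go through as sketched: after round $0$, the pair set $\mathcal{R}_1$ is a deterministic but globally entangled function of $A$, so one cannot condition on it and reapply concentration with fresh randomness; moreover a bare second-moment bound gives only inverse-polynomial tail decay, whereas you need each of the $N^2$ target pairs to land in $\bigcup_k\mathcal{R}_k$ with failure probability $o(N^{-2})$ to survive the union bound. The paper's replacement for your iteration is the ingredient you would need to supply. It fixes in advance a depth $i=i(\epsilon)$, encodes the existence of a depth-$i$ deduction with prescribed boundary triple as the positivity of a single explicit Boolean polynomial $\Lambda^i_{a,b,c}$ of degree $3^{i+1}$ in the indicators $1_A$ (counting embeddings of the $i$-fold subdivided triangle $\mathcal{H}_i$ into $\mathcal{C}_A$), strips away the degenerate collapsed monomials to form $\tilde\Lambda^i_{a,b,c}$ whose partial-derivative expectations $\exn{j}{\tilde\Lambda^i}$ can be bounded by a clean inductive count (Propositions~\ref{dist3}--\ref{dist2}), and then applies Vu's concentration inequality for multivariate Boolean polynomials to obtain a failure probability of the form $e^{-N^{c}}$ for each boundary simultaneously. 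That ``one global polynomial, one strong concentration bound'' move is what makes the threshold $N^{-1/2+\epsilon}$ attainable; without it, the closure bootstrap remains a heuristic.
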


\noindent We take the opportunity to give a quick proof of  the lower bound in Theorem $\ref{Main}$  We claim the following holds:

\begin{claim} 
Let $A\subset \Z{N}$  with $|A|\geq 3$ and suppose there exists some $x_{0} \in A$ such that no $(x,y,z)$ in $A^{3}$ satisfies $x+y=z+x_{0}$. Then we may construct a non-trivial Freiman homomorphism $f: A \to \Z{N}$.
\end{claim}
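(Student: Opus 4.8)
My plan is to build the homomorphism by perturbing the identity map at the single point $x_0$. Fix any $t\in\Z{N}\setminus\{0\}$ and set $f(x)=x$ for $x\in A\setminus\{x_0\}$ and $f(x_0)=x_0+t$. This $f$ is automatically \emph{not} the restriction of a linear function: it agrees with the identity on $A\setminus\{x_0\}$, which has at least two distinct points since $|A|\ge 3$, while $f(x_0)\ne x_0$; over a field two distinct points determine a unique affine map, so when $N$ is prime any linear map agreeing with $f$ on $A\setminus\{x_0\}$ must be the identity, which contradicts $f(x_0)=x_0+t$. (For composite $N$ one instead chooses $t$ outside the finitely many values for which $f$ happens to coincide on $A$ with some linear map; this is a routine genericity point, and in any case the paper works with $N$ prime.) So the whole task is to check that $f$ respects every additive quadruple in $A$.

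For that, I would first rewrite the defining property in the symmetric form: for all $a,b,c,d\in A$, if $a+d=b+c$ then $f(a)+f(d)=f(b)+f(c)$. Since $f$ differs from the identity only at $x_0$, for such a quadruple
$$\bigl(f(a)+f(d)\bigr)-\bigl(f(b)+f(c)\bigr)=t\,(\ell-r),$$
where $\ell$ counts how many of $a,d$ equal $x_0$ and $r$ counts how many of $b,c$ equal $x_0$. So it is enough to prove $\ell=r$ for every additive quadruple in $A$.

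Suppose not; by the symmetry interchanging $(a,d)$ and $(b,c)$ we may assume $\ell>r$, so $(\ell,r)\in\{(1,0),(2,0),(2,1)\}$. The pair $(2,1)$ is impossible: $a=d=x_0$ together with, say, $b=x_0$ forces $c=x_0$ from $a+d=b+c$, contradicting $r=1$. If $(\ell,r)=(2,0)$, then $a=d=x_0$ and $b,c\in A\setminus\{x_0\}$ satisfy $b+c=x_0+x_0$, so $(b,c,x_0)\in A^3$ solves $x+y=z+x_0$ with $x,y\ne x_0$. If $(\ell,r)=(1,0)$, then after a possible swap $a=x_0$ and $b,c,d\in A\setminus\{x_0\}$ satisfy $b+c=d+x_0$, so $(b,c,d)\in A^3$ again solves $x+y=z+x_0$ with $x,y\ne x_0$. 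Now a solution of $x+y=z+x_0$ in $A^3$ is forced to exist only in its trivial forms ($x=x_0,\ y=z$ or $y=x_0,\ x=z$), and one checks it is non-trivial exactly when $x,y\in A\setminus\{x_0\}$; so in both remaining cases we contradict the hypothesis on $x_0$. Hence $\ell=r$ always, $f$ is a Freiman homomorphism, and by the first paragraph it is non-trivial.

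I do not expect a real obstacle here: the perturbed identity is the natural and essentially only candidate, and the verification is a short case analysis. The two things that need care are purely a matter of reading the statement correctly --- interpreting the hypothesis as forbidding the \emph{non-trivial} solutions of $x+y=z+x_0$, since the trivial ones are always present once $|A|\ge 1$ --- together with the innocuous genericity choice of $t$ guaranteeing $f$ is not linear when $N$ is not prime.
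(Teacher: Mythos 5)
Your proof is correct and is essentially the paper's argument in a lightly disguised form: the paper translates $A$ to $B=A-x_0$ and takes $f=\mathbb{I}_{\{0\}}$, the indicator of the origin, which differs from your perturbed identity $x\mapsto x+t\,\mathbb{I}_{\{x_0\}}(x)$ only by the addition of a linear map and a scaling, so the two constructions encode the same perturbation-at-one-point idea and the same case analysis. You do correctly flag (as the paper leaves implicit) that the hypothesis must be read as forbidding \emph{non-trivial} solutions of $x+y=z+x_0$, and your remark about choosing $t$ generically for composite $N$ is a reasonable extra precaution, though the paper contents itself with the prime case.
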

\begin{proof}
Set $B=A-x_{0}$. Since $B$ is simply a translate $A$ we will be done if we can construct a non-trivial Freiman homomorphism on $B$.  Note that $0 \in B$ and there are no triples $(x,y,z)$ in $B$ such that $(x+x_{0})+(y+x_{0})=(z+x_{0})+x_{0}$, which is to say $x+y=z$. In particular the only additive quadruples involving the element $0\in B$ are the trivial ones, that is of the form $0+x=x+0$. \\

\noindent Now define $f : B\to\Z{N}$ as
\begin{equation*}
f(x)=
\begin{cases}
1 \quad\text{if}\; x=0\\
0 \quad\text{otherwise}
\end{cases}
 \end{equation*}
Provided $|B|\geq 3$ the function $f$ is not the restriction of a linear function and furthermore it is easy to see that it indeed defines a Freiman homomorphism on $B$.
\end{proof}

\noindent Thus it is sufficient to show that for $p=o(N^{-2/3})$ we may find such $x_{0} \in A$ with high probability. Let $X$ be the random variable given by the number of additive quadruples in $A^{4}$. Clearly the number of $x \in A$ that are not possible candidates for the above $x_{0}$ is at most  $X$. 
By Markov's inequality we have that:
\begin{equation}
\prob{X\geq \frac{1}{2}Np}\leq 2\frac{1}{Np}\exn{}{X}\leq N^{2}p^{3}=o(1)
\end{equation}
since $\exn{}{X}\leq N^{3}p^{4}$. On the other hand the number of elements in $A$ is given by a $\text{Bin}(N,p)$ binomial random variable which we know to be strongly concentrated around the it's mean provided that $p=\omega(\frac{1}{N})$, in particular
\begin{equation*}
\prob{|A|\leq \frac{1}{2}Np}= o(1)\;,
\end{equation*}
therefore with high probability $|A|>X$ and it follows that there must exist $x_{0} \in A$ with no triple $(x,y,z)$ in $A^{3}$ satisfying $x+y=z+x_{0}$.

\section{Setting and basic observations}

Let $A \subset \Z{n}$ and $f: A \rightarrow \Z{N}$ be a Freiman homomorphism. For the sake of simplicity, we will assume throughout the following sections that $A-A =\Z{N}$ and that $0 \in A$. These assumptions become immaterial when we return to the probabilistic setting since, provided $p=\omega(N^{-1/2})$, then $A-A=\Z{N}$ with high probability and $A$ is linear if and only any translate of $A$ is also linear.

\begin{definition}
The \emph{induced function of f}, $\phi_{f}: \Z{N} \to \Z{N}$ is given by: 
\begin{equation*}
\phi_{f}(d)= f(x+d)-f(x)\; \text{where}\; x, x+d \in A 
\end{equation*}
Note that $\phi_{f}$ is well defined since $f(x+d)-f(x)=f(y+d)-f(y)$ for any $x,y \in A$ since $f$ is a Freiman homomorphism. We will  refer to the induced function simply as $\phi$ unless further clarification is required.
\end{definition}
Here is the key property of the induced function:

\begin{proposition}
A Freiman homomorphism $f$ on $A$ is linear if and only  if the induced function $\phi$
is a group homomorphism.
\end{proposition}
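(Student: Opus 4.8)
The plan is to prove both implications directly, reading off the multiplicative constant from $\phi(1)$ and the additive constant from $f(0)$.

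For the forward implication, suppose $f$ is the restriction to $A$ of the linear map $x\mapsto ax+b$. Then for any $x,x+d\in A$ we have $\phi(d)=f(x+d)-f(x)=(a(x+d)+b)-(ax+b)=ad$, so $\phi$ is the endomorphism $d\mapsto ad$ of $\Z{N}$; in particular it is a group homomorphism. This direction is immediate.

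For the converse, suppose $\phi$ is a group homomorphism. Since $\Z{N}$ is cyclic, generated by $1$, every group endomorphism of $\Z{N}$ has the form $d\mapsto d\,\phi(1)$, so writing $a:=\phi(1)$ we get $\phi(d)=ad$ for all $d\in\Z{N}$. Now I invoke the standing assumption $0\in A$: for every $x\in A$ the pair $(0,x)$ is an admissible choice of representatives in the definition of $\phi$, so $\phi(x)=f(0+x)-f(0)=f(x)-f(0)$. Combining the two facts, $f(x)=\phi(x)+f(0)=ax+b$ for all $x\in A$, where $b:=f(0)$, which is exactly the statement that $f$ is the restriction of a linear function.

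The argument is essentially bookkeeping, and the only point that needs a little care is that $\phi$ is a genuine single-valued function defined on all of $\Z{N}$: single-valuedness is precisely the content of $f$ being a Freiman homomorphism (as noted after the definition of $\phi$), and the domain being all of $\Z{N}$ uses the reduction $A-A=\Z{N}$, so that for each $d$ some admissible pair exists and in particular the pair $(0,x)$ used above is legitimate. Beyond checking that these normalising assumptions are being used correctly, I do not anticipate any substantive obstacle.
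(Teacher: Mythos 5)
Your proof is correct and takes essentially the same route as the paper's: both directions hinge on the identity $f(x)-f(0)=\phi(x)$ (using $0\in A$) together with the fact that a group endomorphism of $\Z{N}$ is determined by $\phi(1)$. You are simply more explicit than the paper about the easy forward direction (which the paper dismisses as ``clearly true'') and about where the standing assumptions $0\in A$ and $A-A=\Z{N}$ enter.
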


\begin{proof}
Since $\phi$ is a group homomorphism it satisfies that $\phi(d+d')=\phi(d)+\phi(d')$ for all $d,d' \in \Z{N}$ . Hence $f$ is linear as for any $x \in A$ $$f(x)-f(0)=\phi(x)=\phi(x-1)+\phi(1)=\ldots=x\phi(1)$$ The converse is clearly true.
\end{proof}
We are interested in understanding which structural properties of $A$ would guarantee that $\phi$ is a linear function. A first simple observation is the following:

\begin{obs}
\label{triples}
Whenever $A$ contains a triple of the form $x,x+d,x+d+d'$ it follows that  $\phi(d+d')=\phi(d)+\phi(d')$  since 
\begin{align*}
f(x+d+d')-f(x)& =&  \big (f(x+d+d')-f(x+d)\big)&+\big(f(x+d)-f(x)\big)\\
& =&  \phi(d') &+\phi(d)
\end{align*}
\noindent We will say, for convenience,  that such a pair $(d,d')$ is \emph{additive}.
\end{obs}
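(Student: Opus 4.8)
The plan is to unwind the definition of the induced function $\phi = \phi_f$ three times and add. Recall that, by definition, $\phi(d) = f(y+d) - f(y)$ for any $y$ with $y, y+d \in A$, and that this value is independent of the choice of such $y$ (this is exactly the well-definedness recorded when $\phi$ was introduced). The only real content of the observation is the remark that, given a triple $x, x+d, x+d+d' \in A$, each of the three telescoping differences appearing in the displayed computation is a bona fide evaluation of $\phi$.

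Concretely, I would proceed as follows. Apply the definition with base point $x$ and step $d$ --- legitimate since $x, x+d \in A$ --- to obtain $\phi(d) = f(x+d) - f(x)$. Then apply it with base point $x+d$ and step $d'$ --- legitimate since $x+d, x+d+d' \in A$ --- to obtain $\phi(d') = f(x+d+d') - f(x+d)$. Finally apply it with base point $x$ and step $d+d'$ --- legitimate since $x, x+d+d' \in A$ --- to obtain $\phi(d+d') = f(x+d+d') - f(x)$. Adding the first two identities, the middle term $f(x+d)$ cancels and the right-hand side becomes $f(x+d+d') - f(x) = \phi(d+d')$, which is precisely the assertion that the pair $(d,d')$ is additive.

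I do not expect any obstacle here: the argument uses nothing beyond the hypothesis that the three points $x, x+d, x+d+d'$ all lie in $A$, which supplies exactly the three membership conditions needed to evaluate $\phi$ at $d$, at $d'$ and at $d+d'$. The observation is best viewed not as a result needing proof but as isolating the elementary mechanism the paper will exploit later: accumulating enough additive pairs $(d,d')$ forces $\phi$ to respect addition everywhere, i.e.\ to be a genuine group homomorphism, at which point the earlier Proposition forces $f$ to be linear. Thus the combinatorial task ahead is to show that, for the relevant range of $p$, a random $A$ contains enough such triples that the relations they induce pin $\phi$ down on all of $\Z{N}$.
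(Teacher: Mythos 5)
Your proof is correct and is exactly the argument the paper uses: identify the three telescoping differences as evaluations of $\phi$ at $d$, $d'$, and $d+d'$, using the three membership conditions supplied by $x, x+d, x+d+d' \in A$, and then add. There is no substantive difference between your presentation and the paper's.
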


\noindent We also have a sort of converse  for this observation:
\begin{proposition}
\label{incl}
Suppose that $\psi : \Z{N} \to \Z{N}$ is a function such that
\begin{eqnarray*}
\psi(d_{1}+d_{2})&=&\psi(d_{1})+\phi(d_{2})
\end{eqnarray*}
whenever the pair $(d_{1},d_{2})$ is additive. Then there exists a Freiman homomorphism $f: A \to \Z{N}$ such that $\phi_{f}=\psi$
\end{proposition}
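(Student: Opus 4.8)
The plan is to verify that $\psi$ itself does the job: I would set $f\colon A\to\Z{N}$ by $f(x)=\psi(x)$ for $x\in A$, and then prove the single claim that for every $d\in\Z{N}$ and every $x$ with $x,x+d\in A$ one has $f(x+d)-f(x)=\psi(d)$. Both assertions of the proposition — that $f$ is a Freiman homomorphism and that $\phi_{f}=\psi$ — fall out of this claim immediately.

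To prove the claim, the point is to exploit the standing assumption $0\in A$. Given $x$ with $x,x+d\in A$, the elements $0,\,x,\,x+d$ all lie in $A$ and form a triple of the shape $y,\,y+d_{1},\,y+d_{1}+d_{2}$ with $y=0$, $d_{1}=x$, $d_{2}=d$; hence the pair $(x,d)$ is additive in the sense of Observation~\ref{triples}. Applying the hypothesis on $\psi$ to this additive pair gives $\psi(x+d)=\psi(x)+\psi(d)$, i.e. $f(x+d)-f(x)=\psi(d)$, as claimed. Now observe: (i) since the right-hand side $\psi(d)$ does not depend on the choice of representative $x$, the induced function $\phi_{f}$ is well defined, and reading the claim back says precisely $\phi_{f}(d)=\psi(d)$ for every $d$, so $\phi_{f}=\psi$ (here $A-A=\Z{N}$ is what guarantees that every $d$ admits such a representative $x,x+d\in A$); and (ii) if $a,b,c,d\in A$ satisfy $a-b=c-d=:e$, then applying the claim with $x=b$ and with $x=d$ yields $f(a)-f(b)=\psi(e)=f(c)-f(d)$, so $f$ respects every additive quadruple and is a Freiman homomorphism.

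I do not expect a genuine obstacle here: this proposition is exactly the converse direction of Observation~\ref{triples}, and the argument amounts to reconstructing a Freiman homomorphism by ``integrating'' $\psi$ along $A$. The one ingredient that is used repeatedly and essentially is the base point $0\in A$ — it is what lets one promote an arbitrary additive relation among elements of $A$, or an arbitrary representation $x,x+d\in A$ of a difference, into an honest additive triple to which the hypothesis on $\psi$ applies. Two harmless loose ends to tidy up: one should check $\psi(0)=0$ (the $d=x=0$ instance of the claim, equivalently the hypothesis applied to the additive pair $(0,0)$), so that the normalisation $f(0)=\psi(0)$ is consistent with $\phi_{f}(0)=0$; and one should be careful to state the claim for \emph{every} admissible representative $x$, so that well-definedness of $\phi_{f}$ is established rather than tacitly assumed.
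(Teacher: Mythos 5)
Your proof is correct and takes essentially the same approach as the paper: both rely on the standing assumption $0\in A$ to turn any pair $x,x+d\in A$ into the additive triple $(0,x,x+d)$, which forces $\psi(x+d)=\psi(x)+\psi(d)$. You factor this out as a single reusable claim and verify the Freiman property via two applications to $x=b$ and $x=d$, which is a slightly more economical packaging of the paper's three triple-based identities, but not a different route.
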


\begin{proof}
The set of additive pairs is invariant under translations and so are Freiman homomorphisms so we may assume  without loss of generality that $0 \in A$.
 
Set $f:A\to\Z{N}$ to be $\psi |_{A}$. Firstly we need to check that that $\psi$ preserves additive quadruples and hence is a  Freiman homomorphism: note that each quadruple may be expressed as $(x,x+d_{1},x+d_{2},x+d_{1}+d_{2}) \in A^{4}$. Now we exploit the fact that $\psi$ satisfies all additive pairs:
\begin{eqnarray*}
 \psi(x+d_{i})&=&\psi(x)+\psi(d_{i}) \mbox{ as } (0,x,x+d_{i}) \in A^{3}\\  
 \psi(x+d_{1}+d_{2})&=& \psi(x) + \psi(d_{1}+d_{2}) \mbox{ as } (0,x,x+d_{1}+d_{2}) \in A^{3}\\
 \psi(d+d')&=&\psi(d) + \psi(d') \mbox{ as } (x,x+d_{1},x+d_{1}+d_{2}) \in A^{3}. 
 \end{eqnarray*}
Therefore $\psi(x+d_{1}+d_{2})+\psi(x)= \psi(x)+\psi(d_{1})+\psi(d_{2})+\psi(x)=\psi(x+d_{1})+\psi(x+d_{2})$ as required. Now recall that $\phi_{f}(d)= f(x+d)-f(x)$ where $x,x+d \in A$ and so $\phi_{f}(d)=\psi(x+d)-\psi(x)=\psi(d)$ since the triple $(0,x,x+d)\in A^{3}$ 
\end{proof}

Hence the simplest condition one could ask to force $\phi$ to be a homomorphism is that for each pair of differences $(d,d') \in \Z{N}^{2}$ we can find a triple of the form $(x,x+d,x+d+d') \in A^{3}$ or in other words  that every pair $(d,d')$ is additive. However, if we have the probabilistic setting in mind, the probability of finding such a triple for $d,d'\neq 0$ is at most $Np^{3}$ by the trivial union bound and thus for $p \ll N^{-1/3}$ this event will occur with small probability. If we are to have any hope of showing Theorem $\ref{Main}$ we ought to look at things in greater detail. 

The second observation is that in fact we need not ask for all pairs $(d,d')$ to be additive in order to conclude that $\phi$ is a homomorphism. For instance, if we are already given that the pairs $(3,1)$, $(2,1)$ and $(1,1)$ are additive we could deduce that $\phi(2+2)=\phi(3+1)=\phi(2)+\phi(1)+\phi(1)= \phi(2)+\phi(2)$ which is to say the pair $(2,2)$ is additive. 

This suggests that we look at how much information we can extract from additive pairs. More explicitly, we would like to answer the following question:  given a fixed set of additive pairs, which equalities other than the trivial ones may we deduce from it? It turns out that this problem can be interpreted as determining whether a given word is the trivial element in a certain group presentation; it is not surprising that in order to tackle it, it will be helpful to introduce a notion very much analogous to that of a \emph{Cayley complex}.

\section{A topological space}
\begin{definition}
Let $\mathcal{C}_{A}$ be the $2$-dimensional cell-complex constructed from $A$ as follows: \\

\noindent$\bullet$ The vertices of $\mathcal{C}_{A}$ are simply the elements of $\Z{N}$ \\

\noindent$\bullet$ The $1$-cells are given by all directed edges $x\to y$ for $x\neq y$\\
\phantom y and are labeled by  $\overline{d}$ where $d=y-x$\\

\noindent$\bullet$ Whenever three edges with labels $\overline{d_{1}}, \overline{d_{2}}, \overline{d_{3}}$ form an oriented triangle $T$ in $C_{A}$\\
\phantom y(which implies that $d_{1}+d_{2}+d_{3}=0$) and there exists a triple of the form\\ \phantom y$(x,x+d_{1},x+d_{1}+d_{2}) \in A^{3}$ we add a $2$-cell with $T$ as its boundary and we\\
 \phantom y label it $[\overline{d_{1}},\overline{d_{2}},\overline{d_{3}}]$ \\

\end{definition}

\begin{remark}
The $1$-skeleton of $\mathcal{C}_{A}$ is the complete Cayley graph on $\Z{N}$. The use of $\overline{d}$ to label the edges may seem strange to the reader; the reason behind it is that we will be considering the chain group given by the edge labels and we wish to make clear the distinction between $d$, an element of $\Z{N}$, and $\overline{d}$, an element of the chain group.
\end{remark}
\begin{remark}
Usually when dealing with simplicial complexes one denotes a specific simplex by its vertex set but  since our definition of a $2$-cell is invariant under any translate it is more appropriate to use the labels of edges bounding it. Also we will make no distinction between a specific $2$-cell and its corresponding label since all the properties we are interested in are translation invariant and two distinct $2$-cells carry the same label if and only if there is a translation mapping the vertices of one to the other.\\
\end{remark}

\noindent The crucial aspect of the construction is the following:
\begin{proposition}
Whenever a $2$-cell $[\overline{d_{1}},\overline{d_{2}},\overline{d_{3}}]$ is in $\mathcal{C}_{A}$ then any induced function $\phi$ must satisfy.
\begin{equation}
\phi(d_{1})+\phi(d_{2})+\phi(d_{3})=0 \tag{$\star$}
\end{equation}
\end{proposition}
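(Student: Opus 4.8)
The plan is to unwind the definition of a $2$-cell and reduce the statement to a single application of Observation \ref{triples} together with the elementary fact that an induced function is odd. By construction, a cell $[\overline{d_1},\overline{d_2},\overline{d_3}]$ appears in $\mathcal{C}_A$ for exactly two reasons: its boundary is an oriented triangle, which forces $d_1+d_2+d_3=0$, and there is a triple $(x,x+d_1,x+d_1+d_2)\in A^3$. These are the only two facts I will need, and since both are independent of the particular Freiman homomorphism, the argument will apply to every induced $\phi$ at once.

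First I would apply Observation \ref{triples} to the triple $(x,x+d_1,x+d_1+d_2)$: the pair $(d_1,d_2)$ is additive, and therefore
\begin{equation*}
\phi(d_1+d_2)=\phi(d_1)+\phi(d_2).
\end{equation*}
Second I would record that $\phi(-d)=-\phi(d)$ whenever this is meaningful: if $z,z+d\in A$ then $\phi(d)=f(z+d)-f(z)$, and evaluating the definition of $\phi$ at the pair $z+d$ and $(z+d)+(-d)=z$ gives $\phi(-d)=f(z)-f(z+d)=-\phi(d)$. Here the relevant pair is $z=x+d_1+d_2=x-d_3$ together with $z+d_3=x$, both of which lie in $A$ by the triple above, so $\phi(d_3)$ and $\phi(-d_3)$ are genuinely defined and negatives of one another. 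Finally, since $d_1+d_2=-d_3$, I would combine the two facts:
\begin{equation*}
\phi(d_1)+\phi(d_2)+\phi(d_3)=\phi(d_1+d_2)+\phi(d_3)=\phi(-d_3)+\phi(d_3)=0,
\end{equation*}
which is $(\star)$.

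I do not expect a real obstacle here: the argument is essentially bookkeeping. The only point deserving a moment's attention is well-definedness — one must check that $\phi(d_1)$, $\phi(d_2)$, and $\phi(d_3)$ are all defined on the nose — but this is precisely what is guaranteed by the triple witnessing the $2$-cell, so the verification is immediate.
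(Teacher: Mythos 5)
Your proof is correct and follows the paper's argument exactly: apply Observation \ref{triples} to the witnessing triple to get $\phi(d_1+d_2)=\phi(d_1)+\phi(d_2)$, use $\phi(-d)=-\phi(d)$, and combine with $d_1+d_2=-d_3$. The only difference is that you spell out the well-definedness of $\phi$ at each $d_i$, which the paper leaves implicit since it assumes $A-A=\Z{N}$ throughout.
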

\begin{proof}
Since $[\overline{d_{1}},\overline{d_{2}},\overline{d_{3}}]$ is a $2$-cell in $C_{A}$ we know that $d_{1}+d_{2}+d_{3}=0$, which is to say $-d_{3}=d_{1}+d_{2}$ and that there exists a triple $(x,x+d_{1},x+d_{1}+d_{2})$ in $A^{3}$. 

By observation $\ref{triples}$ we can conclude that any induced function must satisfy $\phi(d_{1}+d_{2})=\phi(d_{1})+\phi(d_{2})$, furthermore, any induced function must also verify that $\phi(-d)=-\phi(d)$ and hence
\begin{eqnarray*}
\phi(d_{3})+\phi(d_{2})+\phi(d_{1})&=&\phi(-(d_{1}+d_{2}))+\phi(d_{2})+\phi(d_{1})\\
&=&-(\phi(d_{1})+\phi(d_{2}))+\phi(d_{2})+\phi(d_{1})\\
&=&0
\end{eqnarray*}

\end{proof}
\noindent We will say that $\alpha=(\overline{d_{1}},\overline{d_{2}},\dotso,\overline{d_{l}})$ is a cycle in $\mathcal{C}_{A}$ whenever $\partial(\alpha)=0$, where $\partial$ denotes the boundary function. Note that this coincides with the graph-theoretic notion of cycle.
\begin{proposition}
\label{hogy}
Let $\alpha=(\overline{d_{1}},\overline{d_{2}},\dotso,\overline{d_{l}})$ be a cycle in $\mathcal{C}_{A}$. If    $\alpha$ belongs to the trivial homology class then any induced function $\phi$ must satisfy
\begin{equation}
\phi(d_{1})+\phi(d_{2})+\dotso+\phi(d_{l}) = 0 
\end{equation}
\end{proposition}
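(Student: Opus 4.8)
The plan is to package the constraint $(\star)$ into a single algebraic map and then let homology do the rest. I would extend $\phi$ to a homomorphism $\Phi\colon C_{1}(\mathcal{C}_{A})\to\Z{N}$ on the group of $1$-chains by declaring, on the generator corresponding to the directed edge $x\to y$ (which carries the label $\overline{d}$ with $d=y-x$), that $\Phi(x\to y)=\phi(d)$. One must check this is consistent with the orientation convention $\partial(y\to x)=-\partial(x\to y)$: every induced function satisfies $\phi(-d)=-\phi(d)$ and $\phi(0)=0$ (these follow immediately from the definition of $\phi$ together with $0\in A$), so reversing an edge negates its $\Phi$-value exactly as it negates the edge as a chain, and hence $\Phi$ is a well-defined homomorphism. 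Now the closed walk $\alpha=(\overline{d_{1}},\dots,\overline{d_{l}})$, read as the $1$-chain $e_{1}+\dots+e_{l}$ where $e_{i}$ is the directed edge of label $\overline{d_{i}}$ traversed at step $i$, satisfies $\Phi(\alpha)=\phi(d_{1})+\dots+\phi(d_{l})$. So it suffices to show that $\Phi$ vanishes on every null-homologous cycle.

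Next I would observe that $\Phi$ kills the boundary of every $2$-cell. If $F=[\overline{d_{1}},\overline{d_{2}},\overline{d_{3}}]$ is a $2$-cell of $\mathcal{C}_{A}$, then $\partial_{2}F=e_{1}+e_{2}+e_{3}$, where $e_{1},e_{2},e_{3}$ are the three directed edges of the oriented triangle bounding $F$, carrying labels $\overline{d_{1}},\overline{d_{2}},\overline{d_{3}}$; hence $\Phi(\partial_{2}F)=\phi(d_{1})+\phi(d_{2})+\phi(d_{3})$, which is $0$ by the preceding proposition, since this is literally the content of $(\star)$. As the group $B_{1}$ of $1$-boundaries is generated by the chains $\partial_{2}F$ with $F$ ranging over the $2$-cells, linearity of $\Phi$ gives $\Phi|_{B_{1}}=0$.

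Finally, to say that the cycle $\alpha$ lies in the trivial homology class is precisely to say $\alpha\in B_{1}$: indeed $\alpha$ is already a cycle, so $\alpha\in Z_{1}$, and its class in $H_{1}=Z_{1}/B_{1}$ being zero means $\alpha\in B_{1}$. Therefore $\phi(d_{1})+\dots+\phi(d_{l})=\Phi(\alpha)=0$, which is the claim.

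I do not expect any genuine difficulty here: once $(\star)$ is in hand, the statement is a formal consequence of the fact that ``evaluate $\phi$ on edge labels'' defines a homomorphism on $1$-chains that annihilates all $1$-boundaries. The only point needing a little care is the bookkeeping in the first paragraph — ensuring the labelling map descends to a homomorphism on the chain group compatibly with edge orientations and with the way a closed walk is interpreted as a $1$-chain — and this is handled entirely by the identities $\phi(-d)=-\phi(d)$ and $\phi(0)=0$ valid for every induced function.
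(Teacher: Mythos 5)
Your proof is correct and is essentially the paper's own argument, just packaged slightly more formally: you define the evaluation map $\Phi$ on $1$-chains and observe it annihilates $B_1$, whereas the paper directly substitutes the expression $\alpha = \sum_j \partial\sigma_j$ and applies $\phi$ termwise, with the same appeal to $(\star)$ and to $\phi(-d)=-\phi(d)$ to justify the cancellations in the chain group.
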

\begin{proof}
A cycle is in the trivial homology class if and only if we can express it as the boundary of a collection of $2$-cells. Hence
\begin{eqnarray}
\label{cycles}
\begin{split}
\alpha &= \overline{d_{1}}+\ldots +\overline{d_{l}} = \partial \Big [\sum_{j}\sigma_{j}\Big] = \sum_{j}\partial \sigma_{j}\\
&=\sum_{j}\partial [\overline{d_{j1}},\overline{d_{j2}},\overline{d_{j3}}]=\sum_{j} \big ( \overline{d_{j1}}+\overline{d_{j2}}+\overline{d_{j3}} \big ) 
\end{split}
\end{eqnarray}
And so 
\begin{eqnarray*}
\phi(d_{1})+\phi(d_{2})+\dotso+\phi(d_{l})= \sum_{j}\Big (\phi(d_{j1})+\phi(d_{j2})+\phi(d_{j3})\Big)=0
\end{eqnarray*}
as each term in the brackets must add up to $0$ by $(\star)$.
\end{proof}
\begin{remark}
The additions in \eqref{cycles} take place in the chain group of paths, that is we only allow cancellations of the form $\overline{d}+(-\overline{d})=0$. Again since any induced function satisfies $\phi(-d)=-\phi(d)$ for any $d$ we are safe.\\
\end{remark}

\begin{corollary}
\label{hogy2}
Let $A$ be a subset of $\Z{N}$ such that $A-A=\Z{N}$, and let $\mathcal{C}_{A}$ be the cell-complex defined above. Suppose that $\mathcal{C}_{A}$ has a trivial first homology group. Then every Freiman homomorphism $f:A\to \Z{N}$ is the restriction to $A$ of a linear function. 
\end{corollary}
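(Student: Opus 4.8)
The plan is to deduce the corollary directly from Proposition~\ref{hogy} together with the earlier characterisation of linear Freiman homomorphisms (namely that a Freiman homomorphism $f$ is linear if and only if its induced function $\phi$ is a group homomorphism). So I would start by fixing an arbitrary Freiman homomorphism $f:A\to\Z{N}$ and passing to its induced function $\phi=\phi_{f}$, which is defined on all of $\Z{N}$ because $A-A=\Z{N}$. The goal is then reduced to showing $\phi(d+d')=\phi(d)+\phi(d')$ for all $d,d'\in\Z{N}$, and for this I would first record the two elementary facts $\phi(0)=0$ and $\phi(-d)=-\phi(d)$, both immediate from the defining formula $\phi(e)=f(x+e)-f(x)$ (take $e=0$ for the first; swap the roles of $x$ and $x+d$ for the second).

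Next, I would fix $d,d'\in\Z{N}$. If one of $d$, $d'$, $d+d'$ vanishes, the desired identity follows at once from the two facts above, so I may assume all three are nonzero. Then the vertices $0$, $d$, $d+d'$ of $\mathcal{C}_{A}$ are pairwise distinct, so the three directed edges joining them form an oriented triangle in the $1$-skeleton and the sequence of labels $\alpha=(\overline{d},\overline{d'},\overline{-(d+d')})$ is a cycle in $\mathcal{C}_{A}$ in the sense of Proposition~\ref{hogy} (note $d+d'+(-(d+d'))=0$, so $\partial\alpha=0$). Since $H_{1}(\mathcal{C}_{A})$ is trivial by hypothesis, $\alpha$ lies in the trivial homology class, and Proposition~\ref{hogy} then yields $\phi(d)+\phi(d')+\phi(-(d+d'))=0$; combined with $\phi(-(d+d'))=-\phi(d+d')$ this is exactly $\phi(d+d')=\phi(d)+\phi(d')$. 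As $d,d'$ were arbitrary, $\phi$ is a group homomorphism, hence $f$ is the restriction of a linear function, which is the assertion of the corollary.

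As for the main obstacle: there is essentially none left at this stage — all the substance has been invested in Proposition~\ref{hogy} and in the construction of $\mathcal{C}_{A}$, and what remains is routine bookkeeping. The only points requiring a little care are the degenerate cases in which one of $d$, $d'$, $d+d'$ is zero (disposed of using $\phi(0)=0$ and $\phi(-d)=-\phi(d)$) and the verification that, for nonzero differences, the triangular loop is a genuine cycle in the cell complex rather than a degenerate walk. If one wants to name the truly hard conceptual step of the whole development, it is the earlier reduction of ``$\phi$ respects every sum'' to ``every triangular relation among differences is homologically trivial''; the corollary itself is just the clean payoff of that reduction.
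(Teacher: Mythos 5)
Your proof is correct and follows essentially the same route as the paper: pass to the induced function $\phi$, apply Proposition~\ref{hogy} to the triangular cycle $(\overline{d},\overline{d'},\overline{-(d+d')})$, and conclude $\phi$ is a homomorphism. You are slightly more careful than the paper about the degenerate cases where one of $d$, $d'$, $d+d'$ vanishes, but the substance is identical.
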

\begin{proof}
Any cycle in $\mathcal{C}_{A}$ has trivial homology class. In particular, for any choice of $d,d' \in \Z{N}$ the cycle $(d,d',-d-d')$ has trivial homology class and so by Proposition $\ref{hogy}$ we have that the induced function satisfies $\phi(d+d')=\phi(d)+\phi(d')$. Hence $\phi$ is a homomorphism and the result follows.
\end{proof}
What is the principle behind this construction? Recall that our ultimate aim is to understand what the space of possible induced functions $\phi$ looks like. We know that, whenever a pair $(d_{1},d_{2})$ is additive, any such a function must satisfy $\phi(d_{1}+d_{2})=\phi(d_{1})+\phi(d_{2})$ so it is natural to turn our attention to this, a priori, larger space of functions:  \\

\noindent Let $\mathcal{F}$ be the space of all functions $\phi :\Z{N}\to \Z{N}$ such that $\phi(0)=0$ and
\begin{eqnarray*}
\phi(d_{1}+d_{2})&=&\phi(d_{1})+\phi(d_{2})
\end{eqnarray*}
whenever the pair $(d_{1},d_{2})$ is additive.\\

\noindent This is a submodule (over $\Z{N}$) of the free module  $\mathcal{M} \cong \Z{N}^{N-1}$ of all functions $f: \Z{N}\rightarrow \Z{N}$ such that $f(0)=0$. We may take as a basis the elements $e_{1},\ldots,e_{N-1}$ where the $e_{j}:= \mathbb{I}_{\{j\}}$ are the indicator functions taking the value $1$ at $j$ and $0$ otherwise. \\

\noindent Consider the subgroup
 
\begin{eqnarray*}
\mathcal{B}:= \Big \langle e_{d_{1}}+e_{d_{2}}+e_{d_{3}} \; : \;[\overline{d_{1}},\overline{d_{2}},\overline{d_{3}}]\in  \mathcal{C}_{A} \Big \rangle
\end{eqnarray*}

For $\phi, \psi \in \mathcal{M}$ we may define a non-degenerate symmetric bilinear form, analogous to an inner product, by 
$$\langle \phi, \psi \rangle = \sum_{x \in \Z{N}}\phi(x)\psi(x)$$
 It is easy to see that $\phi \in \mathcal{F}$ if and only if $\langle\phi, f\rangle = 0$ for all $f \in \mathcal{B}$ or, using the vector space notation, $\mathcal{F}=\mathcal{B}^{\perp}$. We will make use of the fact that 
 \begin{equation}
 \label{perp}
 |\mathcal{M}|=|\mathcal{B}^{\perp}||\mathcal{B}| 
 \end{equation}
 
  On the other hand we have that $\mathcal{B}$ is, by construction, isomorphic to the group generated by the boundaries of the the $2$-cells of $\mathcal{C}_{A}$. What is the group generated by the cycles? 
  
  Let $G\cong \Z{N}^{N-1}$ be the free module generated by the edge labels and consider the homomorphism $\psi :G \to \Z{N}$ such that $\psi(\overline{d})=d$ for all $\overline{d}\in \mathcal{C}_{A}$. The map is clearly surjective and an element $x \in G$ is a cycle in $\mathcal{C}_{A}$ if and only if $x \in \text{ker}\; \psi$. Hence, using the classification theorem of abelian groups, it follows that  $\text{ker}\;\psi\cong \Z{N}^{N-2}$. 

Therefore the first homology group of the cell-complex $\mathcal{C}_{A}$ is isomorphic to
$\Z{N}^{N-2}/\mathcal{B}$. \\

 This yields a more abstract proof of Proposition $\ref{hogy}$ : If $\mathcal{C}_{A}$ has trivial first homology group then $\mathcal{B}\cong \Z{N}^{N-2}$. Now, since $|\mathcal{M}|=|\mathcal{B}^{\perp}||\mathcal{B}|$ it follows that $|\mathcal{B}^{\perp}|=N$ and so $\mathcal{F}= \mathcal{B}^{\perp}\cong \Z{N}$ as we already now that $\mathcal{F}$ certainly contains the subgroup of linear functions. Hence $\mathcal{F}$ is  precisely the subgroup of linear functions.\\
 
The advantage of this formulation is that it is now simple to show that the converse of Proposition $\ref{hogy}$ also holds:
\begin{proposition}
\label{conv}
Let $A\subset \Z{N}$ and $\mathcal{C}_{A}$ be as above. Suppose that $\mathcal{C}_{A}$ has a non trivial first homology class. Then there exist a non linear Freiman homomorphism $f:A\to \Z{N}$.
\end{proposition}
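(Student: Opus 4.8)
The plan is to run the argument given just before Proposition \ref{conv} in reverse. Non-triviality of $H_1(\mathcal C_A)$ forces the boundary group $\mathcal B$ to be a \emph{proper} subgroup of the cycle group; the duality identity \eqref{perp} then makes $\mathcal F=\mathcal B^{\perp}$ strictly larger than the space of linear induced functions; and Proposition \ref{incl} finally upgrades a non-linear member of $\mathcal F$ to an honest non-linear Freiman homomorphism on $A$.

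Concretely, I would proceed as follows. Write $Z\subseteq \mathcal M$ for the group of cycles of $\mathcal C_A$ (the kernel of $\psi$ transported along the identification $G\cong\mathcal M$, $\overline d\mapsto e_d$); as recalled above $Z\cong\Z N^{N-2}$, so $|Z|=N^{N-2}$, while $\mathcal B\subseteq Z$ and $H_1(\mathcal C_A)\cong Z/\mathcal B$. By hypothesis $Z/\mathcal B\neq 0$, so $\mathcal B$ is proper in $Z$ and $|\mathcal B|<N^{N-2}$. Combining this with $|\mathcal M|=N^{N-1}$ and \eqref{perp} yields
\[
|\mathcal F|=|\mathcal B^{\perp}|=\frac{|\mathcal M|}{|\mathcal B|}>\frac{N^{N-1}}{N^{N-2}}=N .
\]
Since the linear functions $d\mapsto ad$ ($a\in\Z N$), which are exactly the induced functions of the maps $x\mapsto ax+b$, form a subgroup of $\mathcal F$ of order exactly $N$, this strict inequality lets me pick some $\phi\in\mathcal F$ that is not a group homomorphism.

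It then remains only to realise $\phi$ as an induced function. By the defining property of $\mathcal F$, the function $\phi$ satisfies $\phi(d_1+d_2)=\phi(d_1)+\phi(d_2)$ for every additive pair $(d_1,d_2)$, which is precisely the hypothesis of Proposition \ref{incl}; applying that proposition with $\psi=\phi$ produces a Freiman homomorphism $f\colon A\to\Z N$ with $\phi_f=\phi$. As $\phi_f=\phi$ is not a group homomorphism, the proposition characterising linearity of a Freiman homomorphism through its induced function shows that $f$ is not linear, which is what we wanted. I do not expect a genuinely hard step here: the only point needing a little care is the strict inequality $|\mathcal B|<N^{N-2}$, and that is immediate from $H_1(\mathcal C_A)\cong Z/\mathcal B$ together with $|Z|=N^{N-2}$, both established in the preceding discussion. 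Everything else is the identity \eqref{perp}, the trivial count of the $N$ group homomorphisms $\Z N\to\Z N$, and the two cited propositions; in spirit this statement is just the dual of Corollary \ref{hogy2}, with all the substantive work already carried out in the construction of $\mathcal C_A$, the bilinear form, and Proposition \ref{incl}.
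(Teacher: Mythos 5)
Your proposal matches the paper's proof step for step: non-trivial $H_1$ forces $\mathcal B$ proper in the cycle group $\cong\Z{N}^{N-2}$, the identity \eqref{perp} gives $|\mathcal F|>N$, and Proposition \ref{incl} realises a non-linear member of $\mathcal F$ as the induced function of a Freiman homomorphism, which is then non-linear. You have merely made explicit a couple of points the paper leaves implicit (the isomorphism $G\cong\mathcal M$, and that the $N$ linear induced functions $d\mapsto ad$ exhaust the group homomorphisms in $\mathcal F$), so this is the same argument, not a different route.
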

\begin{proof}
If $\mathcal{C}_{A}$  has non trivial first homology group then $\mathcal{B}$ is a proper subgroup of $\Z{N}^{N-2}$. In particular $|\mathcal{B}|<N^{N-2}$ and from \eqref{perp} it follows that $|\mathcal{F}|=|\mathcal{B}^{\perp}|>N$. Hence, using Proposition $\ref{incl}$ we can conclude that the set of Freiman homomorphisms $f:A\to \Z{N}$ with $f(0)=0$ is strictly greater than $N$ and thus there must exist one that is not linear. 

%and we may write it in it's canonical form as a sum of invariant factors:
%$$\mathcal{B}\cong\Z{m_{1}}\oplus\Z{m_{2}}\oplus\dotsb\oplus \Z{m_{N-2}}$$
%where $m_{i}|m_{i+1}$ for $1\leq i\leq N-3$, $m_{N-2}|N$ and $m_{1}<N$ . Hence we may express $\mathcal{F}$ as:
%$$\mathcal{F}=\mathcal{B}^{\perp}\cong  \Z{N} \oplus
%\Z{N/m_{1}}\oplus\Z{N/m_{2}}\oplus\dotsb\oplus \Z{N/m_{N-2}}$$
\end{proof}
\begin{remark}
If we consider homomorphisms  $f:\Z{N}\to \Z{}$ instead the above arguments carry through and give a stronger result: if the cell-complex $\mathcal{C}_{A}$ has trivial first homology group then the only Freiman homomorphisms are the constant functions. 	
\end{remark}

\section{A family of  surfaces}

The previous section has provided us with an accurate geometric description of the structural properties that are necessary and sufficient for a set to have only linear Freiman homomorphisms. 

\vspace{3 pt}
\noindent This suggests the following strategy: 
\begin{enumerate}
\item Pick an orientable surface $\mathcal{H}$ together with a triangulation $\Delta(\mathcal{H})$ that has a triangle $T$  as boundary. 
\item Fix an oriented triangle in $[\overline{a},\overline{b},\overline{c}] \in \mathcal{C}_{A}$
\item Attempt to embed  $\Delta(\mathcal{H})$ in $\mathcal{C}_{A}$ in such way that $T$ gets mapped to $[\overline{a},\overline{b},\overline{c}]$  and any triangular face of $\Delta(\mathcal{H})$ gets mapped into some $2$-cell of $C_{A}$. 
\end{enumerate}

If we can do so then we have explicitly shown that the homology class of the oriented triangle $[\overline{a},\overline{b},\overline{c}]$ is the trivial one and hence, by Corollary $\ref{hogy}$, the pair $(b-a,c-b)$ is additive. The aim is to estimate carefully what is the probability that this process succeeds for all choices of $[\overline{a},\overline{b},\overline{c}]$. 
\vspace{4 pt}
\begin{remark}
We may assume henceforth that $a,b,c$ are distinct as it follows immediately from the definition and the fact that $A-A=\Z{N}$ that pairs of the form $(0,d)$ and $(-d,d)$  must be additive.
\end{remark}

For instance if we set $\mathcal{H}$ to be a single triangular face then embedding $\mathcal{H}$ is precisely demanding that the pair $(b-a,c-a)$ is additive. In other words, with this particular choice of $\mathcal{H}$, the strategy above is the same as trying to show that every pair is additive, which, as we have seen previously, is a sufficient 
condition to guarantee that all induced functions $\phi$ are linear.\\

The hope is that, by choosing more complex $\mathcal{H}$, we will be able to improve the range of values of $p$ for which this embedding strategy is successful with high probability.\\

\noindent We will now consider in detail a particular sequence of simplicial complexes:
\begin{align*}
\mathcal{H}_{0} & = [a,b,c]\\
\mathcal{H}_{1} &= [a,b,z][a,z,c][z,b,c]
 \end{align*}
and in general $\mathcal{H}_{i+1}$ is obtained from $\mathcal{H}_{i}$ by taking each $2$-simplex $[x_{1},x_{2},x_{3}] \in \mathcal{H}_{i}$ and subdividing into three new simplexes $[x_{1},x_{2},x][x_{1},x,x_{3}][x,x_{2},x_{3}]$.  

\begin{figure}[h]
\includegraphics[scale=0.7]{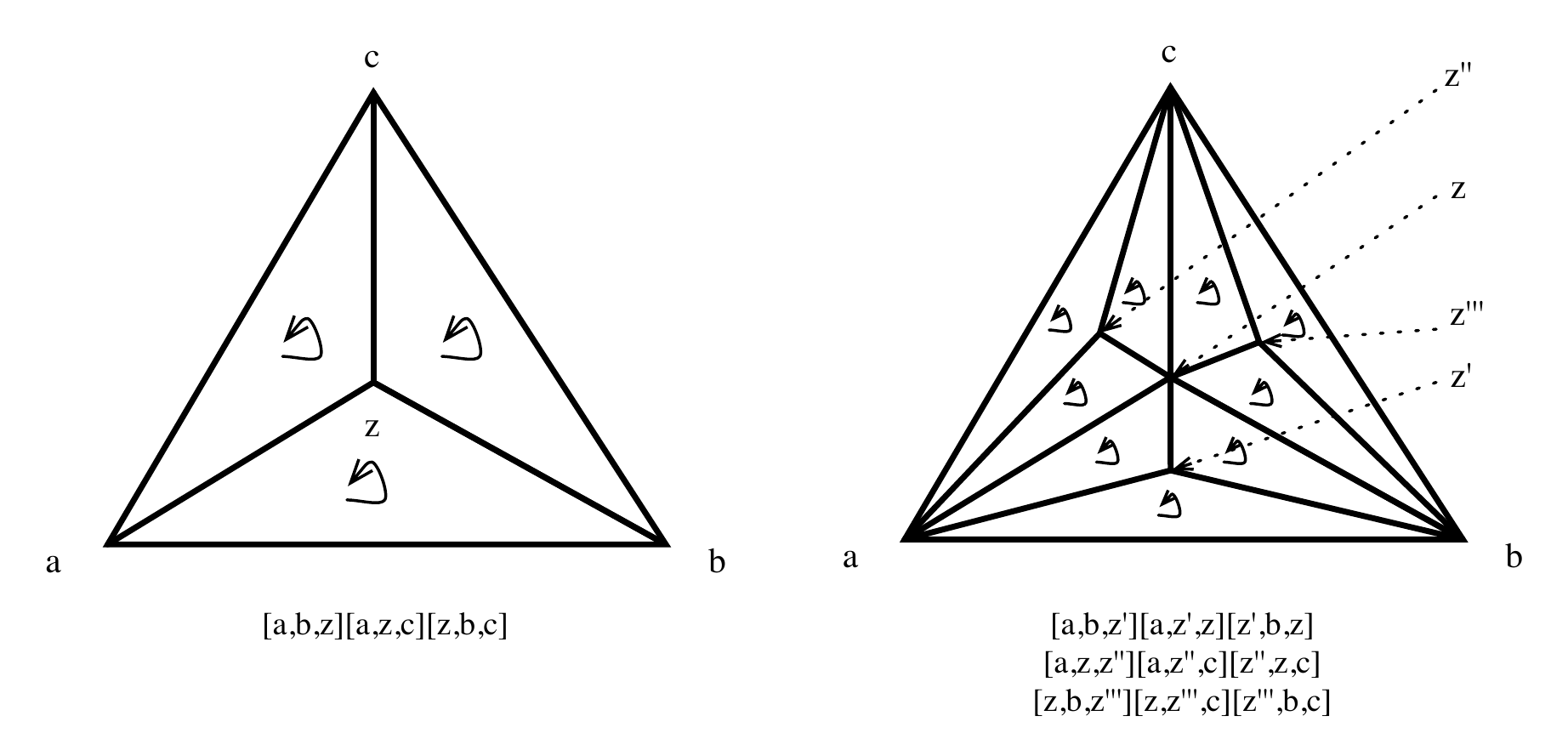}
\caption{Sketch of $\mathcal{H}_{1}$ and $\mathcal{H}_{2}$}
\label{fig1}
\end{figure}
\bigskip
\bigskip
We will show that the family $(\mathcal{H}_{i})_{i\geq 1}$ is a suitable family of simplicial complexes. That is to say that, for any $p=N^{-1/2+\epsilon}$ with $\epsilon>0$, we can find some $i$ (depending only on $\epsilon$) such that with high probability, for all oriented triangles $[\overline{a},\overline{b},\overline{c}]$, we can embed $\mathcal{H}_{i}$ in $\mathcal{C}_{A}$ with boundary $[\overline{a},\overline{b},\overline{c}]$. \\

In order to do this it will be helpful to express the above statement in analytic terms; it will allow us to bring in powerful tools of probability theory concerning the concentration of random variables about their mean.\\

Let us consider the case when $i=0$. The only way we can embed $\mathcal{H}_{0}$ in $\mathcal{C}_{A}$ with boundary $[\overline{a},\overline{b},\overline{c}]$ is if the oriented triangles is the boundary of a $2$-cell in $\mathcal{C}_{A}$ i.e. if and only if there exists a triple of the from $(x,x+a,x+a+b)\in A^{3}$. In terms of the characteristic function $1_{A}$ this statement is equivalent to 
$$\sum_{x\in \Z{N}}t_{x}t_{x+a}t_{x+a+b}>0$$
where $t_1 \ldots t_N$ are Boolean variables with
\begin{equation*}
t_{x}=1_{A}(x)=\begin{cases} 
1& \text{if $x\in A$},\\ 
0& \text{if otherwise} 
\end{cases} 
\end{equation*}

\noindent Once we have the first instance, it is simple to give a recursive expression for the subdivisions. 

\begin{lemma}
Define the family of polynomials

\begin{align*} 
\Lambda_{a,b,c}^{0}&= \sum_{ x \in \Z{n}} t_{x+a}t_{x+b}t_{x+c} \\
\Lambda_{a,b,c}^{i+1}&= \sum_{z \in \Z{n}}\Lambda_{a,b,z}^{i}\Lambda_{a,z,c}^{i}\Lambda_{z,b,c}^{i}
\end{align*} 

Then the  complex $\mathcal{H}_{i}$ can be embedded in $\mathcal{C}_{A}$  with boundary $[\overline{b-a},\overline{c-b},\overline{c-a}]$ if and only if  $$\Lambda_{a,b,c}^{i}[t_{1},\dotso,t_{N}] > 0$$
\end{lemma}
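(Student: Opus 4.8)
The plan is to induct on $i$. The base case $i=0$ is exactly the computation made just before the statement: $\mathcal{H}_{0}=[a,b,c]$ embeds in $\mathcal{C}_{A}$ with boundary $[\overline{b-a},\overline{c-b},\overline{c-a}]$ if and only if that oriented triangle bounds a $2$-cell, which by the definition of $\mathcal{C}_{A}$ happens exactly when some triple $(x,\,x+(b-a),\,x+(c-a))$ lies in $A^{3}$; writing $x=y+a$ this says $\sum_{y}t_{y+a}t_{y+b}t_{y+c}=\Lambda^{0}_{a,b,c}>0$. Before running the induction I would record two elementary facts about the polynomials $\Lambda^{i}_{a,b,c}$. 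First, each is a polynomial with nonnegative integer coefficients in the Boolean variables $t_{x}$, and it is translation invariant, i.e. $\Lambda^{i}_{a+s,b+s,c+s}=\Lambda^{i}_{a,b,c}$ for every $s\in\Z{N}$ (immediate for $i=0$ by reindexing the sum, and preserved by the recursion after also reindexing $z$). Consequently, since all quantities involved are nonnegative integers, $\sum_{z}\Lambda^{i}_{a,b,z}\Lambda^{i}_{a,z,c}\Lambda^{i}_{z,b,c}>0$ if and only if there exists some $z\in\Z{N}$ for which all three factors are simultaneously positive.

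The heart of the argument is that the subdivision operation defining the family is purely local. Indeed $\mathcal{H}_{i+1}$ is obtained from $\mathcal{H}_{0}$ by $i+1$ rounds of stellar subdivision at the barycentre of each $2$-simplex, and since no vertex is ever inserted in the interior of an edge, this is the same as first performing a single round (producing $\mathcal{H}_{1}=[a,b,z][a,z,c][z,b,c]$) and then performing $i$ further rounds to each of the three triangles $[a,b,z]$, $[a,z,c]$, $[z,b,c]$ \emph{separately}. Thus $\mathcal{H}_{i+1}$ is precisely three copies of $\mathcal{H}_{i}$ glued along the three spokes $[a,z],[b,z],[c,z]$ of $\mathcal{H}_{1}$, with outer boundary the triangle $[a,b,c]$; the three copies share no interior vertices, only the four vertices $a,b,c,z$ and the three spokes. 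Hence giving an embedding of $\mathcal{H}_{i+1}$ in $\mathcal{C}_{A}$ with boundary $[\overline{b-a},\overline{c-b},\overline{c-a}]$ amounts to (i) choosing an image $z'\in\Z{N}$ for the barycentre $z$ and (ii) choosing, independently, embeddings of $\mathcal{H}_{i}$ with boundaries $[\overline{b-a},\overline{z'-b},\overline{z'-a}]$, $[\overline{z'-a},\overline{c-z'},\overline{c-a}]$ and $[\overline{b-z'},\overline{c-b},\overline{c-z'}]$. The independence in (ii) is the whole point of locality: once the images of $a,b,c,z$ are fixed, the interiors of the three copies impose no constraint on one another, and since the $1$-skeleton of $\mathcal{C}_{A}$ is complete no further consistency check is needed.

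Putting the pieces together, I would conclude as follows. By the induction hypothesis the three sub-embeddings in (ii) exist if and only if, respectively, $\Lambda^{i}_{a,b,z'}>0$, $\Lambda^{i}_{a,z',c}>0$ and $\Lambda^{i}_{z',b,c}>0$. Therefore an embedding of $\mathcal{H}_{i+1}$ with the prescribed boundary exists if and only if there is some $z'\in\Z{N}$ for which all three of these are positive, and by the nonnegativity remark of the first paragraph this is equivalent to $\sum_{z'}\Lambda^{i}_{a,b,z'}\Lambda^{i}_{a,z',c}\Lambda^{i}_{z',b,c}>0$, that is to $\Lambda^{i+1}_{a,b,c}>0$. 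This closes the induction.

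I expect the only genuine subtlety — and the step I would write out most carefully — to be the decomposition of $\mathcal{H}_{i+1}$ into three barycentre-glued copies of $\mathcal{H}_{i}$ and the ensuing independence of the three sub-embeddings: here one must use that the subdivision never refines an edge, so the three pieces really do meet only along the spokes and the outer triangle, and one must keep the orientation bookkeeping straight so that each triangular face of $\mathcal{H}_{i+1}$ lands on a bona fide $2$-cell of $\mathcal{C}_{A}$ and not merely on a triangle of the $1$-skeleton. A minor point to dispose of is degeneracy (the chosen image $z'$ coinciding with $a,b$ or $c$, or two barycentres colliding): this does no harm, since the polynomial identities treat such configurations automatically and a collision can only make the relevant $\Lambda^{i}$ easier to be positive, so if one insists on honest simplicial embeddings the stated equivalence is still unaffected.
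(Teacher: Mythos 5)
Your proof takes essentially the same route as the paper's: induct on $i$, handle the base case via the definition of a $2$-cell, then observe that $\mathcal{H}_{i+1}$ decomposes (because the subdivision never touches an edge) into three copies of $\mathcal{H}_{i}$ glued along the spokes at the new vertex $z$, so that an embedding of $\mathcal{H}_{i+1}$ is exactly a choice of $z'\in\Z{N}$ together with three independent embeddings of $\mathcal{H}_{i}$, and convert this to $\Lambda^{i+1}_{a,b,c}>0$ using nonnegativity. The paper states this more tersely, leaving the locality/independence of the three sub-embeddings and the degeneracy issue (collisions of images of vertices) implicit; your write-up makes both points explicit, which is a genuine improvement in rigour, and your handling of them is correct for the way the lemma is ultimately used (what matters downstream is that a positive monomial witnesses the triangle's homology class as trivial, not that the map be injective).
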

\begin{proof}
The lemma follows from a simple induction argument. Clearly $\Lambda_{a,b,c}^{0}>0$ if and only if there exists a triple $x+a,x+b,x+c \in A$ which is to say $[\overline{b-a},\overline{c-b},\overline{c-a}] \in \mathcal{C}_{A}$. Assume the result holds for $i$ and notice that $\mathcal{H}_{i+1}$  can be embedded in ${\mathcal{C}_{A}}$ with boundary $[\overline{b-a},\overline{c-b},\overline{c-a}]$ if and only if there exists some $z \in \Z{N}$ such that $\mathcal{H}_{i}$ can be embedded with boundary $[\overline{b-a},\overline{z-b},\overline{z-a}]$, $[\overline{z-a},\overline{c-z},\overline{c-a}]$ and $[\overline{b-z},\overline{c-b},\overline{c-z}]$. By the induction hypothesis this occurs if and only if $\Lambda_{a,b,z}^{i}\Lambda_{a,z,c}^{i}\Lambda_{z,b,c}^{i}>0$ for some $z \in \Z{N}$ i.e if and only if 

$$\sum_{z \in \Z{n}}\Lambda_{a,b,z}^{i}\Lambda_{a,z,c}^{i}\Lambda_{z,b,c}^{i}=\Lambda_{a,b,c}^{i+1}>0$$
 \end{proof}
Hence, by Corollary $\ref{hogy2}$, the main result will be proven if we can show the following:
\begin{theorem}
\label{main2}
Let $A$ be a random subset of $\Z{N}$ where each $x \in \Z{n}$ is chosen independently with probability $p=N^{-\frac{1}{2} + \epsilon}$ for any $\epsilon>0$. Let $\Lambda_{a,b,c}^{i}$ with $(a,b,c)$ distinct be the family of polynomials defined above. Then there exists some $i=i(\epsilon)$ such that:
\begin{align*}
\prob{\Lambda_{a,b,c}^{i}>0\; \text{for all}\; (a,b,c)}= 1-o_{N}(1) 
\end{align*}
\end{theorem}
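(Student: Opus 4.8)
## Proof proposal

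The plan is to control the random variables $\Lambda_{a,b,c}^i$ via a second-moment / concentration argument, passing to the limit through the recursion. First I would compute the expectation: since $\Lambda_{a,b,c}^0$ is a sum of $N$ indicator products each equal to $p^3$ (for distinct $a,b,c$), we have $\exn{}{\Lambda_{a,b,c}^0} \asymp Np^3$, which for $p=N^{-1/2+\epsilon}$ is $N^{-1/2+3\epsilon}$. This is $o(1)$, so at level $i=0$ the polynomial is typically zero — as expected, since a single triangle does not suffice. The key observation is that the recursion $\Lambda^{i+1} = \sum_z \Lambda_{a,b,z}^i\Lambda_{a,z,c}^i\Lambda_{z,b,c}^i$ multiplies expectations (heuristically, treating the three factors as roughly independent, since they share only the vertex $z$ and the boundary): writing $\mu_i$ for the typical order of $\exn{}{\Lambda_{a,b,c}^i}$, one expects $\mu_{i+1}\approx N\mu_i^3$. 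Setting $\mu_i = N^{\alpha_i}$ gives $\alpha_{i+1} = 1+3\alpha_i$, with $\alpha_0 = -1/2+3\epsilon$; the fixed point is $\alpha^* = -1/2$, and since the map $\alpha\mapsto 1+3\alpha$ is expanding away from $-1/2$, starting just above $-1/2$ (i.e.\ $\epsilon>0$) forces $\alpha_i \to +\infty$. Hence for $i = i(\epsilon)$ large enough, $\exn{}{\Lambda_{a,b,c}^i} \geq N^{c}$ for some constant $c>0$, in fact a large power of $N$.

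Next I would upgrade "large expectation" to "positive with high probability, uniformly in $(a,b,c)$". The cleanest route is a bootstrapping lemma: show by induction on $i$ that for each $i$ there is an event $\mathcal{E}_i$, holding with probability $1-o(1)$, on which $\Lambda_{a,b,c}^i \geq \tfrac12\exn{}{\Lambda_{a,b,c}^i}$ (or at least $\geq N^{\delta_i}$ for a growing exponent) simultaneously for all distinct triples $(a,b,c)$. The base case is just the statement that for most triples there is at least one triangle, combined with a union bound over the $N^3$ triples once the expected count of "bad" triples is $o(1)$ — but as noted this fails at $i=0$, so the induction must start at the first level $i_0$ where $\exn{}{\Lambda^{i_0}}$ exceeds, say, $(\log N)^{10}$, and at that level one uses a direct moment estimate (Janson-type or a crude higher-moment bound for the sum-of-products polynomial) to get concentration and then a union bound over triples. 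For the inductive step, conditioned on $\mathcal{E}_i$ every factor $\Lambda_{a,b,z}^i,\Lambda_{a,z,c}^i,\Lambda_{z,b,c}^i$ is $\geq N^{\delta_i}$ for all $z$, so $\Lambda_{a,b,c}^{i+1} \geq N\cdot N^{3\delta_i}$ deterministically on that event; this gives $\delta_{i+1} = 1+3\delta_i$ with no loss, and iterating a bounded number of times (depending on $\epsilon$) pushes the exponent above any fixed threshold while keeping the failure probability $o(1)$.

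I would organise the write-up as: (i) a lemma computing $\exn{}{\Lambda_{a,b,c}^i}$ and solving the exponent recursion $\alpha_{i+1}=1+3\alpha_i$ to pin down $i(\epsilon)$; (ii) a concentration lemma at the "launch level" $i_0$, via a second-moment or Janson inequality, showing $\prob{\Lambda_{a,b,c}^{i_0} = 0} = o(N^{-3})$ for each fixed distinct triple so a union bound closes; (iii) the deterministic bootstrapping step pushing from $i_0$ up to $i(\epsilon)$; then Theorem~\ref{main2} follows, and with it Theorem~\ref{Main} via Corollary~\ref{hogy2} and the preceding lemma identifying embeddability of $\mathcal{H}_i$ with positivity of $\Lambda_{a,b,c}^i$. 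The main obstacle I anticipate is step (ii): the polynomial $\Lambda^{i_0}$ is a sum of products of indicators over a nontrivial intersection pattern (the subdivided triangulation $\mathcal{H}_{i_0}$), so its variance is governed by the subgraph-counting subtleties familiar from random graph theory — one must check that no "subconfiguration" of $\mathcal{H}_{i_0}$ has expected count comparably small, i.e.\ that $\mathcal{H}_{i_0}$ is suitably balanced, so that the second moment is dominated by the square of the first. Controlling these lower-order terms uniformly in $(a,b,c)$, and choosing $i_0$ just large enough that $Np^3$ has been amplified past the concentration threshold while the complex stays balanced, is where the real work lies; everything after it is a bounded iteration of an expanding deterministic recursion.
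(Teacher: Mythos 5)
Your exponent recursion $\alpha_{i+1}=1+3\alpha_i$ with $\alpha_0=-\tfrac12+3\epsilon$ is precisely the computation underlying the paper; the paper's choice $k=3^{i+1}\geq(\epsilon+1)/(2\epsilon)$ is its solution. The genuine gap is that the heuristic $\exn{}{\Lambda^{i+1}_{a,b,c}}\approx N\mu_i^3$ is \emph{false} for $\Lambda^i$ itself, and repairing this is where all the real work of the theorem sits. As the paper shows even at $i=1$, the collapsed (``degenerate'') tuples --- for instance $x_1=x_2=x_3$, under which the $9$-tuple collapses to $(x+a,x+b,x+c,x+z)$ --- contribute a sub-polynomial $P$ with $\exn{}{P}=\Omega(N^2p^4)$, which dominates $N^4p^9$ whenever $p<N^{-2/5}$. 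So for $p=N^{-1/2+\epsilon}$ with $\epsilon$ small, both the mean and the variance of $\Lambda^i$ are governed by degenerate configurations, each of which is positive only on a rare event (if $P>0$ then already $\Lambda^0>0$, and $\prob{P>0}\ll\prob{\Lambda^i>0}$). A second-moment or Janson argument applied directly to $\Lambda^i$ therefore does not need a routine ``balancedness check''; it fails outright, because the typical value of $\Lambda^i$ is far below $\exn{}{\Lambda^i}$. The paper's remedy is to pass to $\tilde\Lambda^i_{a,b,c}$, the sub-polynomial supported on non-degenerate tuples, and then prove the uniform overlap bounds $P(B;\tilde\Lambda^i)\leq CN^{-\lceil|B|/2\rceil}$ of Proposition~\ref{dist3} by induction on the recursive structure of $\psi^i$ (Propositions~\ref{map} and~\ref{form}), feeding these into Vu's Theorem~\ref{Vu}. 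Replacing Vu with a Janson-type lower-tail bound is plausible in principle, but you would still have to define and work with the non-degenerate polynomial and would still need essentially the same overlap estimates to control the correlation term; that is exactly the piece you flag as ``where the real work lies'' and leave open, so the proposal is not a proof.

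The bootstrapping step is a reasonable instinct but is both logically redundant and powerless against the obstacle above. Once you have a launch level $i_0$ with concentration strong enough to survive the union bound over triples (and super-polynomial decay, e.g.\ from a correct Janson bound, already suffices), you may simply take $i=i_0$; the deterministic iteration $\delta_{i+1}=1+3\delta_i$ afterwards adds nothing. Conversely, it supplies no concentration at the launch level, which is precisely where the degeneracy problem must be resolved.
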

The following sections aim to prove Theorem $\ref{main2}$. Before we start tackling the proof, however, we need to introduce a powerful tool developed by V. Vu concerning the concentration of multivariate Boolean polynomials.

\section{Boolean polynomials}

Boolean polynomials are objects that arise very naturally in probabilistic combinatorics as a method of counting `small' structures. They often turn out to be highly concentrated around their means.
 
The classical setting is that of random graphs $G(n,p)$ on  the vertex $[n]$, where each edge $ij$ 
in the graph is chosen independently with probability $p$. In this instance our boolean variables are given by $t_{ij}$ for $i<j$ where $t_{ij}=1$ if the edge $ij$ is in the graph and $0$ otherwise. If we are interested in counting the number of copies of a given small graph $K$ we can look at its corresponding Boolean polynomial; for example, the number edges in $G$ (which we may think of as the number of copies of the graph with 2 vertices and one edge) is given by $Y= \sum_{i<j}t_{ij}$. In this very particular case we may use a well known result of Chernoff which states:

\begin{theorem}[Chernoff]
Let $Y=\sum_{1}^{N} t_{i}$ where $t_{i}$ are independent Bernoulli($p$) random variables. Then for any $\lambda>0$
$$\prob{|Y-\exn{}{Y}|>\sqrt{\lambda N}}\leq 2e^{-\frac{\lambda}{4}}$$
\end{theorem}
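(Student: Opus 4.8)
The plan is to prove the stated Chernoff bound by the standard exponential-moment (Bernstein--Chernoff) method, which for $\{0,1\}$-valued variables gives exactly the form required. Write $Y=\sum_{i=1}^N t_i$ with $t_i$ independent $\mathrm{Bernoulli}(p)$, set $\mu=\exn{}{Y}=Np$, and fix $\lambda>0$; put $s=\sqrt{\lambda N}$, so the goal is $\prob{|Y-\mu|>s}\leq 2e^{-\lambda/4}$. First I would treat the upper tail: for any $h>0$, Markov's inequality applied to $e^{hY}$ gives
\begin{equation*}
\prob{Y-\mu> s}\leq e^{-h(\mu+s)}\exn{}{e^{hY}}=e^{-h(\mu+s)}\prod_{i=1}^N\exn{}{e^{ht_i}}=e^{-h(\mu+s)}\bigl(1+p(e^{h}-1)\bigr)^{N}.
\end{equation*}
Using $1+x\leq e^{x}$ with $x=p(e^{h}-1)$ yields the bound $\exp\bigl(Np(e^{h}-1)-h(\mu+s)\bigr)=\exp\bigl(\mu(e^{h}-1-h)-hs\bigr)$.

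Next I would bound $e^{h}-1-h$. Restricting to $0<h\leq 1$ and using the elementary inequality $e^{h}-1-h\leq h^{2}$ on that range (since $e^{h}-1-h=\sum_{k\geq 2}h^k/k!\leq h^2\sum_{k\geq 0}h^k/2^k\cdot(\text{something})$, or more simply $e^h-1-h\le h^2$ for $h\le 1$ by a direct check), the exponent is at most $\mu h^{2}-hs$. Optimizing the quadratic $\mu h^2 - hs$ over $h$ suggests $h=s/(2\mu)$; one then has to check this lies in $(0,1]$. Here I would split into cases: if $s\leq 2\mu$ the choice $h=s/(2\mu)\in(0,1]$ is legitimate and gives exponent $\leq -s^{2}/(4\mu)=-\lambda N/(4Np)=-\lambda/(4p)\leq-\lambda/4$ since $p\leq 1$, which is even stronger than needed. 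If instead $s>2\mu$, take $h=1$; then the exponent is $\mu(e-2)-s\leq \mu - s < \mu - 2\mu=-\mu$, and separately $s>2\mu$ combined with $s=\sqrt{\lambda N}$ and $\mu=Np$ forces $\lambda N>4N^2p^2$, i.e. $\lambda/4 > N p^2\ge ?$; the clean way is to note $s>2\mu$ gives $s^2>4\mu s$, hence $s^2/(4\mu)>s>\mu$, so in all cases the exponent is $\le -s^2/(4\mu) \le -\lambda/4$. Thus $\prob{Y-\mu>s}\leq e^{-\lambda/4}$.

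For the lower tail I would run the symmetric argument with $e^{-hY}$, $h>0$: $\prob{\mu-Y>s}\leq e^{h(\mu - s)}\prod_i\exn{}{e^{-ht_i}}=e^{h(\mu-s)}(1+p(e^{-h}-1))^N\leq \exp\bigl(\mu(e^{-h}-1+h)-hs\bigr)$, and $e^{-h}-1+h\leq h^{2}/2\leq h^2$ for all $h\ge 0$, so the same optimization $h=s/(2\mu)$ (now with no upper constraint on $h$, since $e^{-h}-1+h\le h^2$ holds globally) gives exponent $\leq -s^{2}/(4\mu)\leq-\lambda/4$. Adding the two tail bounds via the union bound gives $\prob{|Y-\mu|>s}\leq 2e^{-\lambda/4}$, as claimed.

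The only genuine subtlety — the "main obstacle", though it is minor — is handling the constraint $0<h\leq 1$ needed for the inequality $e^{h}-1-h\le h^{2}$ in the upper tail when $s$ is large relative to $\mu$; the case split above resolves it, and one could alternatively avoid the issue entirely by observing that when $s>N$ the event $\{Y-\mu>s\}$ is empty (since $Y\le N$) and when $s\le N$ one has $s\le\sqrt{\lambda N}$ forcing $\lambda\ge s^2/N\ge s^2/(4\mu)\cdot(4p)\dots$ — but the two-case exponential argument is cleanest. Everything else is routine: independence to factor the moment generating function, $1+x\le e^x$, and the elementary bounds on $e^{\pm h}-1\mp h$.
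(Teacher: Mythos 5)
The paper offers no proof of this statement at all: Chernoff's bound is quoted there as a classical tool, so your argument stands on its own. Your route — bounding $\exn{}{e^{\pm hY}}$ by independence and $1+x\le e^{x}$, then optimising $h$ — is the standard and correct one, and the lower-tail half is sound as written (there $e^{-h}-1+h\le h^{2}$ holds for all $h>0$, so $h=s/(2\mu)$ is always admissible and gives exponent $\le -s^{2}/(4\mu)=-\lambda/(4p)\le-\lambda/4$).

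The one genuine flaw is in the upper-tail case $s>2\mu$. Your ``clean way'' asserts that with $h=1$ the exponent is $\le -s^{2}/(4\mu)$ in all cases; this is false. With $h=1$ the exponent is $\mu(e-2)-s$, and even the weaker claim $\mu-s\le -s^{2}/(4\mu)$ rearranges to $(s-2\mu)^{2}\le 0$, so it can never hold when $s\neq 2\mu$; concretely, $\mu=1$, $s=10$ gives exponent $\approx -9.3$ while $-s^{2}/(4\mu)=-25$. (Also, $s>2\mu$ gives $s^{2}>2\mu s$, not $s^{2}>4\mu s$.) What you actually need is exponent $\le-\lambda/4=-s^{2}/(4N)$, and $\mu(e-2)-s\le\mu-s<-s/2$ yields this precisely when $s\le 2N$; for larger $s$ the $h=1$ bound genuinely fails to be small enough (take $N=1$, $p=0.1$, $\lambda=100$). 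The repair is exactly the observation you relegate to an aside: if $s>N$ then $Y-\mu\le N<s$ makes the event empty, so the bound is trivial, and the case $s\le 2N$ is covered by $-s/2\le -s^{2}/(4N)$. Promote that remark to the actual treatment of the large-$s$ case (and note the degenerate $p=0$ case separately, since $h=s/(2\mu)$ is then undefined), delete the false ``in all cases'' sentence, and the proof is complete.
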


Of course, one could readily apply this whenever the polynomial we are considering has the property that all terms in the summand are independent, but this is not usually the case. 
This bound was generalised by Azuma:
\begin{theorem}[Azuma]
Let $\exn{j}{Y}=\exn{}{Y|t_{1},\ldots,t_{j}}$ and set $d_{j}(t_{1},\ldots,t_{j})=\exn{j}{Y}-\exn{j-1}{Y}$.
Then for any $\lambda>0$
$$\prob{|Y-\exn{}{Y}|>\lambda}\leq 2 \exp{\Big (-\frac{\lambda^{2}}{2\sum_{1}^{n}\|d_{i}\|^{2}_{\infty}}\Big )}$$
where $\|d_{i}\|_{\infty}$ is the maximum value of $d_{i}$ over all possible values of $t_{1},\ldots,t_{i}$.
\end{theorem}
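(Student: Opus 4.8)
The plan is to recognise the sequence $Z_{j} := \exn{j}{Y}$ as the Doob martingale of $Y$ with respect to the filtration generated by $t_{1},\ldots,t_{j}$, and then to run the standard exponential-moment (Chernoff--Cram\'er) argument on it. Since the $t_{i}$ are \emph{all} the underlying Boolean variables, $Y$ is a function of $t_{1},\ldots,t_{n}$, so that $Z_{n}=Y$ and $Z_{0}=\exn{}{Y}$; moreover $Y-\exn{}{Y}=\sum_{j=1}^{n} d_{j}$ telescopes, and $\exn{j-1}{d_{j}}=\exn{j-1}{\exn{j}{Y}}-\exn{j-1}{Y}=0$ by the tower property. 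The whole proof then reduces to controlling, for each $j$, the conditional moment generating function $\exn{j-1}{e^{s d_{j}}}$.

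The key ingredient is a conditional form of Hoeffding's lemma: if $W$ has $\exn{}{W}=0$ and $|W|\leq c$ everywhere, then $\exn{}{e^{sW}}\leq e^{s^{2}c^{2}/2}$ for every real $s$. I would prove this by convexity of $x\mapsto e^{sx}$ on $[-c,c]$, bounding $e^{sW}$ by the chord joining the two endpoints $\pm c$, taking expectations (the linear term vanishes since $\exn{}{W}=0$) to obtain $\cosh(sc)$, and finally using $\cosh(u)\leq e^{u^{2}/2}$, which follows from a term-by-term comparison of the two power series. Applying this conditionally on $t_{1},\ldots,t_{j-1}$ with $W=d_{j}$ is legitimate, because once $t_{1},\ldots,t_{j-1}$ are fixed the random variable $d_{j}$ has conditional mean $0$ and satisfies $|d_{j}|\leq\|d_{j}\|_{\infty}$, the latter being a genuine deterministic constant. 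This yields
$$\exn{j-1}{e^{s d_{j}}}\leq \exp\!\Big(\tfrac{1}{2}s^{2}\|d_{j}\|_{\infty}^{2}\Big).$$

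I would then iterate the tower property from the outside in. Writing $\sigma^{2}=\sum_{i=1}^{n}\|d_{i}\|_{\infty}^{2}$, and using that $\sum_{j=1}^{n-1}d_{j}$ is a function of $t_{1},\ldots,t_{n-1}$,
$$\exn{}{e^{s(Y-\exn{}{Y})}}=\exn{}{e^{s\sum_{j=1}^{n-1}d_{j}}\,\exn{n-1}{e^{s d_{n}}}}\leq e^{s^{2}\|d_{n}\|_{\infty}^{2}/2}\;\exn{}{e^{s\sum_{j=1}^{n-1}d_{j}}},$$
and repeating this step $n$ times gives $\exn{}{e^{s(Y-\exn{}{Y})}}\leq e^{s^{2}\sigma^{2}/2}$. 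Markov's inequality applied to $e^{s(Y-\exn{}{Y})}$ then gives $\prob{Y-\exn{}{Y}>\lambda}\leq e^{-s\lambda+s^{2}\sigma^{2}/2}$ for all $s>0$; optimising at $s=\lambda/\sigma^{2}$ yields the bound $e^{-\lambda^{2}/(2\sigma^{2})}$. The same argument run on the martingale with differences $-d_{j}$ (which have the same sup-norms) controls the lower tail $\prob{Y-\exn{}{Y}<-\lambda}$, and a union bound over the two one-sided events produces the factor $2$.

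The only genuine subtlety, and hence the part that needs care rather than calculation, is the measurability bookkeeping in the conditional Hoeffding step: one must check that $\|d_{j}\|_{\infty}$ is a deterministic constant dominating $|d_{j}|$ \emph{uniformly} over all values of $t_{1},\ldots,t_{j}$ — which is exactly how the theorem defines it — so that the conditional moment generating function bound holds pointwise in $t_{1},\ldots,t_{j-1}$ and can therefore be factored out of the outer expectation at each stage of the iteration. Everything else is the routine Chernoff optimisation.
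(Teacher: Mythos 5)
Your proof is correct. Note, though, that the paper does not prove this statement at all: Azuma's inequality is quoted there as a standard tool (alongside Chernoff's bound) with no proof given, so there is no internal argument to compare yours against. What you give is the classical Azuma--Hoeffding argument, and all the essential points are in place: the Doob martingale $Z_j=\exn{j}{Y}$ with $Z_0=\exn{}{Y}$, $Z_n=Y$ and telescoping differences $d_j$ of conditional mean zero; the conditional Hoeffding lemma proved by bounding $e^{sW}$ by the chord on $[-c,c]$ and using $\cosh(u)\leq e^{u^{2}/2}$; the iterated tower-property factorisation giving $\exn{}{e^{s(Y-\exn{}{Y})}}\leq e^{s^{2}\sigma^{2}/2}$ with $\sigma^{2}=\sum_{i}\|d_{i}\|^{2}_{\infty}$; Markov plus optimisation at $s=\lambda/\sigma^{2}$; and the union bound over the two tails giving the factor $2$. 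You are also right to flag the one point of care: the theorem's $\|d_i\|_{\infty}$ must be read as a deterministic uniform bound on $|d_i|$ (over all values of $t_1,\ldots,t_i$), which is exactly what lets the conditional moment generating function bound hold pointwise and be pulled out of the outer expectation at each stage.
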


In the case where the $\|d_{i}\|_{\infty}$ are very small in comparison to the expectation Azuma's bound is ideal for showing strong concentration. Unfortunately, there are many examples where this fails.

Suppose we are interested in the number of triangles in $G$. That is we are interested in the value of the random variable  $Y=\sum_{i<j<k}t_{ij}t_{jk}t_{ik}$.
The expectation of $Y$ is $\Theta(N^{3}p^{3})$, and each edge lies in at most $n-2$ triangles so in this case $\|d_{i}\|_{\infty}\leq n-2$ for all $i$ but more importantly, no matter which ordering we choose for the variables $t_{ij}$, we have that $\|d_{\textit{last}}\|_{\infty}=(1-p)(n-2)$. Azuma's bound will only yield useful information if $\exn{}{Y} >> n^{3/2}$ so if $p>>n^{-1/2}$. On the other hand it is a very rare event that a particular edge lies in $n-2$ triangles: the expected number of such triangles is about $n^{2}p^{2}$. Furthermore we can use Chernoff to show that this number is strongly concentrated around its mean. The intuition here is that one should not look at the maximum value of $d_{i}$ but  rather at its expectation to obtain concentration for smaller ranges of $p$. This is essentially what Vu's result manages to achieve but before we state the theorem we need to introduce some terminology.

The first thing to point out is that since we are dealing with $\{0,1\}$ valued variables the polynomials $t_{1}^{2}t_{2}t_{3}^{3}$ and $t_{1}t_{2}t_{3}$ take identical values, so without loss of generality we may assume that each variable has degree at most $1$.  To be more precise, every Boolean polynomial $P$ has a unique reduced
form as 
 $$P[t_1,\ldots,t_N]=\sum_{B \in \mathcal{A} }w(B)t_B$$
where $\mathcal{A}$ is a family of subsets of $[N]$, $t_B= \prod_{i\in B}t_i$ and $w: \mathcal{A}\to \mathbb{R}$ is some weight function. We say that $P$ is positive if $w\geq 0$.

\begin{definition}
Given $C \subset[N]$ and a Boolean polynomial $P=\sum_{B \in \mathcal{A} }t_B$ let 
$$m(C,l;P)=\sum_{\substack{B \in \mathcal{A}\\ C\subseteq B,\; |B|=l}} w(B) $$ 
in other words $m(C,l;P)$ counts the (weighted) number of terms in $P$ of length $l$ containing the term $t_C$. Furthermore let 
$$m(C;P)=\sum_{l}m(C,l;P)$$
and note that $m(\emptyset;P)$ is by definition the total number of terms in $P$.\\
\end{definition}

We would like to replace the  $\|d_{i}\|_{\infty}$ in Azuma's bound by something weaker such as an average. Now we define the quantities that will play this important role.

\begin{definition}
Given any $C\subset [N]$ and $P=\sum_{B \in \mathcal{A} }w(B)t_B$ the partial derivative of $P$ with respect to $C$ is given by the polynomial
 $$\partial_{C}P=\prod_{i\in C} \frac{\partial}{\partial t_{i}}\phantom y P$$
and set

\begin{equation*}
\exn{j}{P}= \max \{\exn{}{\partial_{C}P}: C \subset [N],|C|\geq j \}
\end{equation*}
\end{definition}

It is clear that if one knows the values of $m(C,l;P)$ for all $C, l$ then one may easily compute $\exn{j}{P}$ for all $j$ as 
$$\exn{}{\partial_C P} = \sum_{B \in \mathcal{A}, C\subseteq B}w(B)\exn{}{t_{B\setminus C}}=\sum_l m(C,l;P)p^{l-|C|}$$

In order to illustrate these definitions, let us go back for a moment to the problem of counting triangles in the random graph $G(n,p)$. Recall that we are looking at the polynomial $Y=\sum_{i<j<k}t_{ij}t_{jk}t_{ik}$ that counts the number of triangles.\\

\noindent Set $C=\{12\}$, that is the single edge $12$. Then
$$\partial_{C}Y=\sum_{k>2}t_{1k}t_{2k}$$
\noindent so $m(C;Y)=n-2$ and $\exn{}{\partial C}=p^{2}(n-2)$. The former quantity is the maximal number of triangles containing the edge $12$ and the latter the expected number. More generally,  
if we fix a single edge there are exactly $n-2$ triangles containing it and if we fix 2 edges there is (at most) 1 triangle containing both edges, therefore
\begin{align*}
\exn{}{\partial_{C}Y}&=(n-2)p^{2}&\text{for any $C$ with $|C|=1$}\\
\exn{}{\partial_{C}Y}&\leq p&\text{for any $C$ with $|C|=2$}\\
\exn{}{\partial_{C}Y}&\leq 1&\text{for any $C$ with $|C|\geq 3$}\\
\end{align*}
Hence, provided that $p>>n^{-1}$ 
\begin{eqnarray}
\label{triangle}
\exn{0}{Y}=\exn{}{Y}= {n \choose 3}p^{3}, &\exn{1}{Y}= \max\{(n-2)p^{2},1\}, & \exn{2}{Y}=1
\end{eqnarray}

We are now ready to state Vu's Theorem on the concentration of multivariate Boolean polynomials:

\begin{theorem}[V. Vu]
\label{Vu}
Let $P$ be a positive reduced Boolean polynomial of degree $k$. Then for any positive numbers  $\mathcal{F}_0>\mathcal{F}_1>\ldots>\mathcal{F}_k$ and $\lambda$ satisfying
\begin{itemize}
\item for all $0\le j \leq k, \; \mathcal{F}_j \geq \exn{j}{P}$ 
\item for all $0\le j \leq k, \; \mathcal{F}_j/\mathcal{F}_{j+1} \geq \lambda + 4j\log{N}$
\end{itemize}
there exist some constants $c_{k}$ and $d_{k}$ depending only on $k$ such that the following holds:
\begin{equation*}
\prob{|P-\exn{}{P}|\geq c_k \sqrt{\lambda\mathcal{F}_0\mathcal{F}_1}}\leq d_ke^{-\frac{\lambda}{4}}
\end{equation*}
\noindent where the $\exn{j}{P}$ are defined as above.
\end{theorem}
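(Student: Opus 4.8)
The plan is to prove this by induction on the degree $k$ of $P$, the engine being a Doob martingale in which the Boolean variables $t_1,\dots,t_N$ are exposed one at a time, combined with a variance-sensitive martingale tail bound of Freedman/Bernstein type. The worst-case Lipschitz estimate (Azuma) is too weak here, for exactly the reason illustrated by the triangle-counting example above; the whole point of the induction is that the \emph{random} Lipschitz coefficients of the Doob martingale are themselves conditional expectations of lower-degree Boolean polynomials and hence fall under the inductive hypothesis. For the base case $k=1$ we have $P=w(\emptyset)+\sum_i w(\{i\})t_i$ with all $w(\{i\})\ge 0$ and $\exn{1}{P}=\max_i w(\{i\})$; this is a sum of independent bounded nonnegative variables, Bernstein's inequality applies directly, and the variance proxy $\sum_i w(\{i\})^2\le(\max_i w(\{i\}))\sum_i w(\{i\})$ is put in the required form via $\exn{0}{P}\ge\exn{}{P}\ge p\sum_i w(\{i\})$, so that $p\sum_i w(\{i\})^2\le\mathcal F_0\mathcal F_1$ and the tail has the shape $\exp(-\Omega(\lambda))$ at deviation $\sqrt{\lambda\mathcal F_0\mathcal F_1}$.

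For the inductive step, fix the exposure order and set $Y_i=\exn{}{P\mid t_1,\dots,t_i}$, so $Y_0=\exn{}{P}$ and $Y_N=P$. Multilinearity of $P$ lets us write $P=t_i\,\partial_iP+R_i$ with $\partial_iP$ and $R_i$ both free of $t_i$, which gives the exact identity $Y_i-Y_{i-1}=(t_i-p)D_i$ where $D_i:=\exn{}{\partial_iP\mid t_1,\dots,t_{i-1}}$ is measurable with respect to $t_1,\dots,t_{i-1}$, has $|t_i-p|\le 1$, and satisfies $\exn{}{D_i}=\exn{}{\partial_iP}\le\exn{1}{P}\le\mathcal F_1$. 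The heart of the argument is to show that, off an exceptional event of probability $O(N\,d_{k-1}e^{-\lambda/4})$, every $D_i$ is at most, say, $2\mathcal F_1$. For this one observes that $\partial_iP$ is a positive reduced Boolean polynomial of degree $\le k-1$, and that $D_i$ is obtained from $\partial_iP$ by substituting the constant $p$ for $t_i,t_{i+1},\dots,t_N$; a direct check of the definitions shows that such a substitution keeps the polynomial positive and that its parameters obey $\exn{j}{D_i}\le\exn{j}{\partial_iP}\le\exn{j+1}{P}\le\mathcal F_{j+1}$. Hence the inductive hypothesis applies to each of these $O(N)$ auxiliary polynomials with the shifted thresholds $\mathcal F_1>\mathcal F_2>\cdots$; the gap hypothesis $\mathcal F_j/\mathcal F_{j+1}\ge\lambda+4j\log N$ is exactly what is needed so that, after a union bound over the $O(N)$ polynomials (this is the source of the $\log N$ terms, with the coefficient $4j$ tracking the number of $j$-subsets being controlled), every correction term $c_{k-1}\sqrt{\lambda\,\mathcal F_1\mathcal F_2}+\cdots$ can be absorbed into $\mathcal F_1$.

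On the good event the increments satisfy $|Y_i-Y_{i-1}|\le 2\mathcal F_1$, and the predictable quadratic variation is $\sum_i\exn{}{(Y_i-Y_{i-1})^2\mid t_1,\dots,t_{i-1}}=p(1-p)\sum_i D_i^2\le 2\mathcal F_1\cdot p\sum_i D_i$. Because $p\sum_i\exn{}{\partial_iP}=\sum_B|B|\,w(B)\,p^{|B|}\le k\,\exn{}{P}\le k\mathcal F_0$, the factor $p$ cancels the unavoidable $1/p$, and once $p\sum_i D_i$ is itself bounded by $O(\mathcal F_0)$ on a slightly enlarged good event the quadratic variation is $O(k\,\mathcal F_0\mathcal F_1)$. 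Freedman's inequality applied to $(Y_i)$ — more precisely to its version stopped at the first time some $D_i$ crosses its threshold, which makes the conditioning on the good event legitimate — together with the $j=0$ gap $\mathcal F_0/\mathcal F_1\ge\lambda$ (used to dominate the Bernstein correction by the variance term) then yields $\prob{|P-\exn{}{P}|\ge c_k\sqrt{\lambda\,\mathcal F_0\mathcal F_1}}\le d_k e^{-\lambda/4}$ after adding back the exceptional probability and choosing $c_k,d_k$ in terms of $c_{k-1},d_{k-1}$ and $k$.

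The main obstacle is the uniform control of the random Lipschitz coefficients $D_i$: one has to check carefully that partial evaluation at $p$ preserves positivity and the ordering of the $\exn{j}{\cdot}$ parameters so that the induction genuinely closes; one has to calibrate the union bound so that the $4j\log N$ slack is exactly consumed; one has to control the auxiliary sum $p\sum_i D_i$ (for instance by a further, cruder application of the machinery to the degree-$(k-1)$ polynomial $\sum_i\partial_iP$); and one has to handle the ``conditioning on a good event'' step rigorously through a stopping time before invoking Freedman's inequality. Propagating the constants $c_k$ and $d_k$ through the induction is then routine.
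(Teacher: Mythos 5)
A preliminary remark: the paper contains no proof of this statement at all — it is imported as a black box, attributed to Vu, and the actual proof lives in the Kim--Vu/Vu papers on polynomial concentration. So there is no in-paper argument to compare yours against; the fair comparison is with Vu's published proof, and your outline does reproduce its genuine architecture: induction on the degree, the Doob martingale with the exact increment identity $Y_i-Y_{i-1}=(t_i-p)D_i$, the observation that $\partial_i P$ is positive of degree $\le k-1$ with $\exn{j}{\partial_i P}\le\exn{j+1}{P}$ so the random Lipschitz coefficients fall under the inductive hypothesis at the shifted thresholds $\mathcal{F}_1>\mathcal{F}_2>\cdots$, a Freedman/Bernstein bound for the stopped martingale, and the use of the $j=0$ gap $\mathcal{F}_0/\mathcal{F}_1\ge\lambda$ to dominate the Bernstein correction. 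The base case $k=1$ is fine as you describe it.

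As a proof, however, two of the steps you defer are genuine gaps, not routine checks. (a) The quadratic-variation bound: you need $p\sum_i D_i=O_k(\mathcal{F}_0)$ with failure probability of order $e^{-\lambda/4}$, and neither of your suggestions delivers it. Applying the machinery to $\sum_i\partial_i P$ does not make sense as stated, because $\sum_i D_i$ is not the conditional expectation of that polynomial at any single time — each $D_i$ is conditioned on a different prefix $t_1,\ldots,t_{i-1}$. The obvious alternative, summing the termwise inductive deviations, gives $p\sum_i D_i\le k\mathcal{F}_0+O\bigl(pN\,c_{k-1}\sqrt{\lambda\mathcal{F}_1\mathcal{F}_2}\bigr)$, and the second term is not $O(\mathcal{F}_0)$ under the stated hypotheses: already in the triangle illustration with $p=\Theta(n^{-2/3})$, $\mathcal{F}_0=\Theta(n)$, $\mathcal{F}_1=\Theta(\sqrt n)$, $\mathcal{F}_2=1$, $\lambda=\Theta(\sqrt n)$, that term is of order $n^{11/6}\gg\mathcal{F}_0$. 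The theorem assumes nothing (such as $\lambda\ll\mathcal{F}_1$ together with small $pN$) that would kill it, so the variance control must be organised differently — this is precisely where Vu's actual argument earns its keep. (b) The union bound: as written your exceptional event has probability $O(N d_{k-1}e^{-\lambda/4})$, which is not of the required form $d_k e^{-\lambda/4}$. The fix is to invoke the inductive hypothesis at $\lambda+4\log N$ rather than at $\lambda$ — exactly what the hypothesis $\mathcal{F}_j/\mathcal{F}_{j+1}\ge\lambda+4j\log N$ is calibrated for — but you only gesture at this, and after the shift the enlarged deviations $c_{k-1}\sqrt{(\lambda+4\log N)\mathcal{F}_1\mathcal{F}_2}$ must be re-absorbed into $\mathcal{F}_1$, again using the gap hypothesis and not merely $\mathcal{F}_1>\mathcal{F}_2$. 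In short: right skeleton, but the two points at which the theorem is actually hard are named rather than proved.
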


We will now apply Vu's Theorem to the triangle counting problem in $G(n,p)$. If we let $p=\Theta(n^{-2/3})$ then, from \eqref{triangle}, it follows $\exn{0}{Y}=\Theta(n)$ and that $\exn{1}{Y}$, $\exn{2}{Y}\leq 1$.\\

Now set $\mathcal{F}_{0}= Cn$, $\mathcal{F}_{1}= \sqrt{Cn}$ and $\mathcal{F}_{2}=1$ where $C$ is some sufficiently large constant. It is easy to check that for $\lambda=a \sqrt{n}$, where $a$ is a constant chosen so that $c_{k}\sqrt{\lambda \mathcal{F}_{0}\mathcal{F}_{1}}\leq \epsilon \exn{}{Y}$, then the $\mathcal{F}_{i}$ meet the requirements of the theorem and therefore:
$$\prob{|Y-\exn{}{Y}|> \epsilon \exn{}{Y}}\leq Ce^{-C^{-1}\sqrt{n}}.$$
which is a considerable improvement of the range of $p$ over the bound obtained using Azuma's inequality.

\newpage
\section{The base case}

Before we tackle the proof of Theorem $\ref{main2}$ we will show first a weaker result which will serve as a nice concrete example as well as paving the way for the more general proof.\\ 
\begin{theorem}
\label{base}
Let $A$ be a random subset of $\Z{N}$ where each $x \in \Z{n}$ is chosen independently with probability $p=N^{-\frac{4}{9} + \epsilon}$ for any $\epsilon>0$. 
Then 
\begin{align*}
\prob{\Lambda_{a,b,c}^{1}>0\; \text{for all}\; (a,b,c)}= 1-o_{N}(1) 
\end{align*}
\end{theorem}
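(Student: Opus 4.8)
The plan is to apply Vu's concentration inequality (Theorem~\ref{Vu}) to each polynomial $\Lambda_{a,b,c}^{1}$ separately and then take a union bound over the at most $N^{3}$ triples $(a,b,c)$; since the pairs coming from non-distinct triples are automatically additive, we may assume $a,b,c$ distinct throughout. Unwinding the recursion gives
$$\Lambda_{a,b,c}^{1}=\sum_{z,x_{1},x_{2},x_{3}\in\Z{N}}t_{x_{1}+a}t_{x_{1}+b}t_{x_{1}+z}\,t_{x_{2}+a}t_{x_{2}+z}t_{x_{2}+c}\,t_{x_{3}+z}t_{x_{3}+b}t_{x_{3}+c},$$
so $\Lambda_{a,b,c}^{1}$ is a positive reduced Boolean polynomial of degree $k=9$, counting the $4$-tuples $(z,x_{1},x_{2},x_{3})$ for which the associated list of nine elements of $\Z{N}$ lies entirely in $A$. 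Since $a,b,c$ are distinct, every possible coincidence among these nine linear forms is a proper affine condition on $(z,x_{1},x_{2},x_{3})$, so all but $O(N^{3})$ of the $N^{4}$ tuples yield nine distinct elements, and as $Np=N^{5/9+\epsilon}\to\infty$ the remaining (degenerate) tuples contribute only lower order. Hence, uniformly over all distinct triples,
$$\exn{}{\Lambda_{a,b,c}^{1}}=(1+o(1))\,N^{4}p^{9}=(1+o(1))\,N^{9\epsilon},$$
so the mean tends to infinity, though only polynomially in $N^{\epsilon}$.

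For the higher quantities $\exn{j}{\Lambda_{a,b,c}^{1}}$ I would argue that fixing $j$ coordinates of a reduced term pins one of the nine forms (when $j=1$) to a prescribed value, which cuts the family of $4$-tuples completing it down to dimension $3$; weighting these completions, with the degenerate ones again lower order since $Np\to\infty$, gives, uniformly over distinct $(a,b,c)$, that $\exn{1}{\Lambda_{a,b,c}^{1}}=O(N^{3}p^{8})=o(N^{9\epsilon})$ and $\exn{j}{\Lambda_{a,b,c}^{1}}\le\exn{1}{\Lambda_{a,b,c}^{1}}$ for all $j\ge1$. Then I would invoke Theorem~\ref{Vu} with $\lambda=13\log N$, $M=\lambda+4k\log N$, $\mathcal{F}_{0}=C_{0}\exn{}{\Lambda_{a,b,c}^{1}}$ and $\mathcal{F}_{j}=D\,M^{\,k-j}$ for $1\le j\le k$, where $D=\max\{C'\exn{1}{\Lambda_{a,b,c}^{1}},\,M\}$ and $C_{0},C'$ are suitable absolute constants. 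Using $D\ll\exn{}{\Lambda_{a,b,c}^{1}}/\mathrm{polylog}(N)$---a consequence of the estimates above and $\exn{}{\Lambda_{a,b,c}^{1}}=(1+o(1))N^{9\epsilon}$---one checks that $\mathcal{F}_{j}\ge\exn{j}{\Lambda_{a,b,c}^{1}}$ for all $j$, that $\mathcal{F}_{0}>\mathcal{F}_{1}>\dots>\mathcal{F}_{k}>0$, and that the ratio hypotheses of Theorem~\ref{Vu} hold (for $1\le j\le k-1$ the ratio is exactly $M$, and $\mathcal{F}_{0}/\mathcal{F}_{1}\to\infty$); the same bound gives $\lambda\mathcal{F}_{1}=o(\exn{}{\Lambda_{a,b,c}^{1}})$, whence $c_{k}\sqrt{\lambda\mathcal{F}_{0}\mathcal{F}_{1}}=o(\exn{}{\Lambda_{a,b,c}^{1}})$. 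Consequently, for $N$ large,
$$\prob{\Lambda_{a,b,c}^{1}=0}\le\prob{\,\big|\Lambda_{a,b,c}^{1}-\exn{}{\Lambda_{a,b,c}^{1}}\big|\ge c_{k}\sqrt{\lambda\mathcal{F}_{0}\mathcal{F}_{1}}\,}\le d_{k}e^{-\lambda/4}=d_{k}N^{-13/4},$$
and a union bound over the at most $N^{3}$ triples gives total failure probability $O(N^{-1/4})=o(1)$, which proves the theorem.

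I expect the main obstacle to be the two uniform estimates $\exn{}{\Lambda_{a,b,c}^{1}}=(1+o(1))N^{4}p^{9}$ and $\exn{1}{\Lambda_{a,b,c}^{1}}=O(N^{3}p^{8})$: one must verify that for \emph{every} distinct triple $(a,b,c)$---including highly structured ones such as three-term arithmetic progressions---no single coincidence among the nine linear forms, and no further coincidence forced once one variable is pinned, can occur for more than $O(N^{3})$ choices of $(z,x_{1},x_{2},x_{3})$, and that the weighted contribution of all the degenerate (non-injective) terms stays of strictly lower order. Once those are secured, the choice of Vu parameters above is essentially forced and the rest is bookkeeping. (The exponent $-\tfrac{4}{9}$ is exactly the value at which $N^{4}p^{9}$ ceases to tend to infinity; Theorem~\ref{main2} should follow by running the same scheme with $\mathcal{H}_{i}$ in place of $\mathcal{H}_{1}$, where $\Lambda^{i}$ carries $\tfrac{1}{2}(3^{i+1}-1)$ free parameters against $3^{i+1}$ points, so the threshold becomes $N^{-1/2+1/(2\cdot3^{i+1})}\to N^{-1/2}$.)
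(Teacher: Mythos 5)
Your overall framework — expand $\Lambda^1_{a,b,c}$ into nine linear forms, estimate Vu's quantities $\exn{j}{\cdot}$, apply Theorem~\ref{Vu}, and union-bound over the $O(N^3)$ triples — matches the paper's, and you correctly flag the degenerate-term estimates as the crux. But the claim you hope to verify is in fact \emph{false}: the two estimates $\exn{}{\Lambda^1_{a,b,c}}=(1+o(1))N^4p^9$ and $\exn{1}{\Lambda^1_{a,b,c}}=O(N^3p^8)$ do not hold for small $\epsilon$, and this is not a bookkeeping detail but the whole difficulty. The reason is not that degenerate tuples are numerous, but that a single coincidence can collapse many of the nine forms at once. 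Taking $x_1=x_2=x_3=x$ reduces the nine forms to the four $x+a,\,x+b,\,x+c,\,x+z$, so $\Lambda^1_{a,b,c}$ dominates termwise the positive sub-polynomial
\[
P=\sum_{x,z}t_{x+a}t_{x+b}t_{x+c}t_{x+z},\qquad \exn{}{P}=\Theta(N^2p^4)=\Theta\!\left(N^{2/9+4\epsilon}\right),
\]
which exceeds $N^4p^9=N^{9\epsilon}$ whenever $\epsilon<2/45$, so $\exn{}{\Lambda^1}\ne(1+o(1))N^4p^9$. Worse, for $C=\{a,b,c\}$ one has $\partial_C\Lambda^1\ge\partial_C P\ge\sum_z t_z$, hence
\[
\exn{1}{\Lambda^1_{a,b,c}}\ \ge\ \exn{}{\partial_C\Lambda^1_{a,b,c}}\ \ge\ Np=N^{5/9+\epsilon},
\]
which dwarfs your bound $O(N^3p^8)=O(N^{-5/9+8\epsilon})$ by a factor $N^{10/9-7\epsilon}$. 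Once $\mathcal{F}_1\ge Np$ is forced, the requirement $c_k\sqrt{\lambda\mathcal{F}_0\mathcal{F}_1}<\exn{}{\Lambda^1}$ forces $\lambda\lesssim Np^3=N^{-1/3+3\epsilon}\to0$ for $\epsilon<1/9$, so Theorem~\ref{Vu} applied directly to $\Lambda^1_{a,b,c}$ gives nothing.

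The missing idea, which is the content of the paper's Section~6, is to apply Vu not to $\Lambda^1_{a,b,c}$ but to the restricted polynomial
\[
\tilde{\Lambda}^1_{a,b,c}=\sum_{(x_1,x_2,x_3,z)\text{ non-degenerate}}t_{x_1+a}t_{x_1+b}t_{x_1+z}t_{x_2+a}t_{x_2+z}t_{x_2+c}t_{x_3+z}t_{x_3+b}t_{x_3+c},
\]
where a quadruple is discarded if any nontrivial $\{-1,0,1\}$-relation of length at most $4$ holds among $x_1,x_2,x_3,z,a,b,c$. This discards only $O(N^3)$ quadruples while guaranteeing that every retained monomial has genuine degree $9$, so $\exn{}{\tilde\Lambda^1}=\Theta(N^4p^9)$, and Proposition~\ref{dist4} then gives $P(B;\tilde\Lambda^1)\le CN^{-\lceil|B|/2\rceil}$, hence $\exn{j}{\tilde\Lambda^1}\le C(Np^2)^{-\lceil j/2\rceil}N^4p^9$ with $Np^2=N^{1/9+2\epsilon}\to\infty$. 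Now the $\mathcal{F}_j$ can be chosen geometrically and Vu applies, and $\Lambda^1_{a,b,c}\ge\tilde\Lambda^1_{a,b,c}$ converts concentration of the latter into positivity of the former. Your plan is essentially correct \emph{for $\tilde\Lambda^1$}; as written for $\Lambda^1$ it does not get off the ground for $\epsilon<2/45$.
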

One easy observation is that without loss of generality we may assume that $a,b,c$ are distinct as otherwise if, say $a=b$, we would be ultimately trying to show that any induced function $\phi$ must satisfy that $\phi(0+d)=\phi(0)+\phi(d)$ for $d=c-a$, which is trivially true provided that the function $\phi$ is defined at $d$.\\

We wish to apply Vu's to show that Theorem $\ref{base}$ holds. In order to do so we must estimate accurately the valuee of all the different partial derivatives; in this case the simplest approach is to start by giving an explicit expansion of the polynomial $\Lambda^{1}_{a,b,c}$ :
\begin{align*}
\Lambda^{1}_{a,b,c}&= \sum_{z}\Lambda_{a,b,z}^{0}\Lambda_{a,z,c}^{0}\Lambda_{z,b,c}^{0}\\
&=\sum_{x_{1},x_{2},x_{3},z} (x_{1}+a,x_{1}+b,x_{1}+z,x_{2}+a,x_{2}+z,x_{2}+c,x_{3}+z,x_{3}+b,x_{3}+c)
\end{align*}
where here the $9$-tuple $(v_{1},\ldots,v_{9})$ is used to denote the term $t_{v_{1}}\cdots t_{v_{9}}$. \\

Let $B\subset [N]$; to compute the expected value of  $\partial_{B}\Lambda^{1}_{a,b,c}$ one essentially needs to count the number of occurrences of the set $B$ within each of the terms of  $\Lambda^{1}_{a,b,c}$. \\

\noindent To this end we lay out the following set up:
\begin{enumerate}
\item We begin by identifying the monomials in $\Lambda^{1}$ as points in $\Z{N}^{9}$, parametrized by the quadruple $(x_{1},x_{2},x_{3},z)$, via the linear map $\psi : \Z{N}^{4} \to \Z{N}^{9}$ : 
\begin{equation}
\notag
(x_{1},x_{2},x_{3},z) \mapsto (x_{1}+a,x_{1}+b,x_{1}+z,x_{2}+a,x_{2}+z,x_{2}+c,x_{3}+z,x_{3}+b,x_{3}+c)
\end{equation}
\item Pick  a point $(x_{1},x_{2},x_{3},z) \in \Z{N}^{4}$ uniformly at random.\\
\item Ask what the probability is that $B \subset \psi(x_{1},x_{2},x_{3},z)=(v_{1},\ldots,v_{9})$ i.e. $B$ appears as a (not necessarily ordered) subsequence of $(v_{1},\ldots,v_{9})$. 

\end{enumerate}

\noindent If we were in an ideal world, to be able to apply Vu's Theorem directly, we would have that:
\begin{itemize}
\item $\exn{0}{\Lambda^{1}} = \Theta(N^{4}p^{9})$
\item $\exn{1}{\Lambda^{1}}= o(N^{4}p^{9})$
\end{itemize}

\noindent Unfortunately a quick inspection shows that this is not quite the case: suppose we  choose  to take $x_{1}=x_{2}=x_{3}$ then the $9$-tuple above collapses into  $$P= \sum_{x,z}(x+a,x+b,x+c, x+z)$$

It is easy to see that $\exn{}{P} = \Omega(N^{2}p^{4})$  which is bad news as this is of a greater order of magnitude than $N^{4}p^{9}$  as long as $p< N^{-2/5}$. In particular for $p=N^{-\frac{4}{9} + \epsilon}$, provided $\epsilon$ is sufficiently small, $\exn{}{P} \gg \exn{}{\Lambda^{1}}$. \\

\noindent Furthermore if we take $B=\{a,b,c\}$ then 
\begin{equation*}
\exn{}{\partial_{B}\Lambda^{1}}\geq\exn{}{\partial_{B}P}\geq Np
\end{equation*}
which again for $p=N^{-\frac{4}{9} + \epsilon}$ is of a greater order of magnitude than 
$N^{4}p^{9}$, or $N^{2}p^{4}$ for that matter. \\

At this point it may seem as though there is little hope of being able to deduce Theorem $\ref{base}$ by applying Vu's Theorem.
However if we take a closer look into $P$ we note that $\Lambda^{0}_{a,b,c} \geq P$. In particular  $P>0$  implies $\Lambda^{0}_{a,b,c}>0$ and so  
the triangle $[a,b,c]$  was already in fact a simplex in the surface.\\

\begin{remark}
Informally speaking what is happening here is that there are some inherent degenerate terms in the definition
of $\Lambda^{1}$, coming from some specific quadruples $(x_{1},x_{2},x_{3},z)$, which for a certain range of $p$ become bigger than the main term.
At first glance this might not seem problematic, since ultimately we wish to show that $\Lambda^{1}_{a,b,c} >0$ with very high probability -- i.e. decaying exponentially in $N$ so a priori having big, but nonetheless positive, degenerate terms in $P$ should make  the task easier. The  subtlety is that despite having a big expectation $P$ will not be strongly concentrated around its mean and in fact $\prob{P>0} \ll \prob{\Lambda^{1}>0}$. For instance, in the above example if we choose $p=N^{-9/19}$ then  $\exn{}{P}\geq N^{2/19}\gg N^{4}p^{9}$ but $\prob{P>0}\leq \prob{\Lambda^{0}>0}\leq \exn{}{\Lambda^{0}}= N^{-8/19}$.    
\end{remark}

The way to get around this issue is by restricting our attention to quadruples $(x_{1},x_{2},x_{3},z)$ such that the all 9 linear forms take distinct values.\\

\begin{definition}
We will say that a quadruple $(x_{1},x_{2},x_{3},z)$  is degenerate if  it satisfies any non trivial relation $r(x_{1},x_{2},x_{3},z,a,b,c)$  of length at most 4. That is to say, we may find $y_{1}, y_{2}, y_{3}, y_{4}\in\{x_{1},x_{2},x_{3},z,a,b,c\}$ and $\epsilon_{i}\in \{-1,0,1\}$ such that
$$\epsilon_{1}y_{1}+\epsilon_{2}y_{2}+\epsilon_{3}y_{3}+\epsilon_{4}y_{4}=0$$	 
\end{definition}
\noindent Let $\mathcal{H}$ to be the set of all non-degenerate quadruples and set 
\begin{align*}
\tilde{\Lambda}^{1}:= \sum_{(x_{1},x_{2},x_{3},z)\in \mathcal{H}}\psi(x_{1},x_{2},x_{3},z)
\end{align*}

Note that, for any non trivial relation $r$, the number of quadruples that satisfy it is at most $N^{3}$, furthermore the number of such relations is bounded above by an absolute constant $C$ (here setting $C=2^{3}\, 7^{4}$ will do). Therefore the number of degenerate quadruples is at most  $CN^{3}$ and the number of monomials in $\tilde{\Lambda}^{1}$ is still of order $N^{4}$, all of length 9  by definition, as otherwise we would obtain some relation     $r(x_{1},x_{2},x_{3},z,a,b,c)$  of length 4. \\

\noindent Now we can easily compute the expectation of $\tilde{\Lambda}^{1}$:
\begin{equation*}
\exn{}{\tilde{\Lambda}^{1}}= \sum_{\bold v \in \mathcal{H}}\exn{}{t_{\psi(\bold v)}}=\Theta(N^{4}p^{9})
\end{equation*}

\noindent Before we dive any further into computations we introduce a bit of notation: for $B \subset [N]$ and $Q$ any Boolean polynomial let
$$P(B;Q) = \frac{1}{m(\emptyset;Q)}m(B;Q)$$
that is the proportion of monomials in $Q$ containing the term $t_{B}$. \\

In particular, $P(B; \tilde{\Lambda}^{1})$ is precisely the probability that $B$ is contained in a monomial of $\tilde{\Lambda}^{1}$ when we pick a non-degenerate quadruple $(x_{1},x_{2},x_{3},z)$ uniformly at random. \\

\begin{proposition}
Given any $B \subset [N]$ with we have that
\label{dist4}
\begin{equation}
\begin{split}
P(B; \tilde{\Lambda}^{1})&\leq CN^{-\lceil\frac{|B|}{2}\rceil} \quad  \textrm{if} \quad |B|<9 \\
         &\leq CN^{-4} \qquad \; \;\textrm{if}  \quad |B|=9
\end{split}
\end{equation}
where $C$ is some sufficiently large positive absolute constant.
\end{proposition}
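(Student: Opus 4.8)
The plan is to think of choosing a monomial of $\tilde\Lambda^1$ as choosing a non‑degenerate quadruple $(x_1,x_2,x_3,z)\in\Z N^4$ uniformly at random, and then to ask: for a fixed target set $B\subseteq[N]$, what is the probability that every element of $B$ appears among the nine coordinates of $\psi(x_1,x_2,x_3,z)$? First I would set up the bookkeeping: each of the nine linear forms $x_1+a,\,x_1+b,\,x_1+z,\,x_2+a,\,x_2+z,\,x_2+c,\,x_3+z,\,x_3+b,\,x_3+c$ is an affine function of the four parameters, grouped into three ``blocks'' of three forms (one per subtriangle), where the block for $x_i$ involves only $x_i$ and $z$. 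For the event $B\subseteq\psi(\mathbf v)$ to occur we must choose, for each $\beta\in B$, at least one of the nine forms that is to equal $\beta$; this is a finite case analysis over which subset of the nine slots gets ``used'' and by which element of $B$. Each such choice imposes a system of affine equations on $(x_1,x_2,x_3,z)$, and the probability that a uniformly random quadruple satisfies a given consistent system of rank $r$ is exactly $N^{-r}$ (and $0$ if inconsistent). So the whole estimate reduces to showing that, in every case that can actually occur for a \emph{non‑degenerate} quadruple, the rank of the forced affine system is at least $\lceil |B|/2\rceil$ (resp. $4$ when $|B|=9$), and then summing over the boundedly many cases, which only costs an absolute constant factor $C$.

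The key structural point — and this is where non‑degeneracy earns its keep — is that within a single block the three forms $x_i+a,\,x_i+z,\,x_i+c$ take \emph{distinct} values (any coincidence among them, or with forms from other blocks sharing a variable, is a length‑$\le 4$ relation in $x_1,x_2,x_3,z,a,b,c$ and hence excluded), so a block can absorb at most three distinct elements of $B$, and absorbing $k$ elements of one block pins down the pair $(x_i,z)$ up to an affine condition of rank $\min(k,2)$ — rank $1$ if $k=1$, rank $2$ if $k\ge 2$. The three blocks only interact through the shared variable $z$. I would argue that if $B$ is spread across the blocks so that block $i$ must absorb $k_i$ elements with $k_1+k_2+k_3\ge|B|$ (minus a small overcount for a $\beta$ that could sit in two blocks, which non‑degeneracy again controls), then the total rank is at least $\sum_i \min(k_i,2)$ plus a correction: once $z$ is pinned by the first block that has $k_i\ge 2$, each further nonempty block still contributes at least one more equation. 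Checking that $\sum_i\min(k_i,2)$, suitably corrected, is always $\ge\lceil|B|/2\rceil$ is the combinatorial heart; the worst cases are $|B|$ small and concentrated in one block (then the rank is $\min(|B|,2)$, which is $\ge\lceil|B|/2\rceil$ only for $|B|\le 3$ — but for $|B|\ge 4$ one is forced into a second block, restoring the bound) and the boundary case $|B|=9$, where all three blocks are full, $z$ is determined, and each $x_i$ is then determined, giving rank exactly $4$.

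I expect the main obstacle to be precisely this rank lower bound in the awkward middle range $4\le|B|\le 8$: one has to verify that a non‑degenerate quadruple genuinely cannot pack, say, $5$ elements of $B$ into the nine slots using only $2$ independent affine constraints. Concretely, the danger is a ``collision pattern'' in which many of the nine forms coincide in value while the quadruple itself avoids all short relations — so I would want to enumerate, once and for all, which equalities among the nine forms are \emph{forced} to be non‑trivial relations of length $\le 4$ (these are excluded) versus which are automatic (e.g. forms literally equal as polynomials never occur since $a,b,c,z$ are generically distinct), and conclude that after removing degenerate quadruples the nine forms are all distinct, so $B\subseteq\psi(\mathbf v)$ forces $|B|$ \emph{distinct} slot assignments. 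Once distinctness of the slots is in hand, the rank count becomes the clean block‑by‑block argument above, and the proposition follows by summing $N^{-r}$ over the constantly‑many admissible assignment patterns and absorbing everything into $C$. A secondary, purely clerical nuisance will be keeping the overcount for elements of $B$ that lie in more than one block from eroding the $\lceil|B|/2\rceil$ bound; here I would note that $a$ appears only in the $x_1,x_2$ blocks, $b$ only in the $x_1,x_3$ blocks, $c$ only in the $x_2,x_3$ blocks, and $z$ in all three, so each shared value forces an extra equation relating two of the $x_i$, which compensates for the lost slot.
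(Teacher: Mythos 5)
Your approach is conceptually close to the paper's: both decompose $B$ according to which of the three subtriangles (blocks) its elements land in, both exploit non-degeneracy to make the nine slots distinct, and both reduce to a combinatorial bound per decomposition. The difference is one of accounting. The paper does \emph{not} compute ranks of affine systems in $(x_1,x_2,x_3,z)$ directly; it works recursively, writing $\tilde\Lambda^1 \subset \sum_z \Lambda^0_{a,b,z}\Lambda^0_{a,z,c}\Lambda^0_{z,b,c}$, bounding $P(B;\tilde\Lambda^1)$ by $\frac{2}{N}\sum_z\sum_{S_1\sqcup S_2\sqcup S_3=B} P(S_1;\Lambda^0_{a,b,z})P(S_2;\Lambda^0_{a,z,c})P(S_3;\Lambda^0_{z,b,c})$, and then using only the elementary facts that $P(S;\Lambda^0)\le C'N^{-1}$ for $S\ne\emptyset$ and that a part of size $3$ pins $z$ down to at most $6$ values. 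This keeps the combinatorics to a small case split ($|S_j|<3$ for all $j$ versus some $|S_j|=3$) and, crucially, is stated in a form that generalises directly to the inductive Proposition for $\tilde\Lambda^i$. Your direct rank formulation buys nothing in clarity for the base case and would become genuinely painful at higher levels of the recursion.

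More importantly, as written your sketch has a real gap. The claim that ``absorbing $k$ elements of one block pins down the pair $(x_i,z)$ up to an affine condition of rank $\min(k,2)$'' is false: if $k=2$ and the two occupied slots are both of the form $x_i+\textrm{const}$ (e.g.\ $x_1+a$ and $x_1+b$), the rank of the system in $(x_i,z)$ is $1$, not $2$, together with a $0/1$ consistency condition; $z$ is not pinned. Consequently the statement ``once $z$ is pinned by the first block that has $k_i\ge 2$\ldots'' is also wrong. The correct per-block contribution is only $\lceil k_i/2\rceil$ in the worst case, and the eventual inequality $\sum_i(\text{rank from block }i)\ge\lceil|B|/2\rceil$, accounting for the shared variable $z$, is exactly what you flag as ``the combinatorial heart'' and then do not actually carry out; for $|B|=4$ with $k_1=k_2=2$, $k_3=0$ the total rank can genuinely be as low as $2=\lceil 4/2\rceil$, so the bound is tight and the argument needs care. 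Finally, the paragraph worrying about ``elements of $B$ that lie in more than one block'' is chasing a non-issue: for a non-degenerate quadruple the nine linear forms are pairwise distinct, so any $\beta\in B$ occupies exactly one slot and the decomposition $S_1\sqcup S_2\sqcup S_3=B$ is unambiguous once the slot assignment is fixed; the constant $C$ simply absorbs the bounded number of assignments. So: right skeleton, but the key rank lemma is both mis-stated and unproven, and the worst-case analysis in the range $4\le|B|\le 8$ is missing.
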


\begin{proof}
Recall that 
\begin{equation}
\tilde{\Lambda}_{a,b,c}^{1}\subset \Lambda^{1}_{a,b,c}= \sum_{z}\Lambda_{a,b,z}^{0}\Lambda_{a,z,c}^{0}\Lambda_{z,b,c}^{0}
\end{equation}
where we write $P\subset Q$ to mean that every monomial in $P$ is also a monomial in $Q$.\\

\noindent Hence:

\begin{equation}
m(B;\tilde{\Lambda}^{1})\leq \sum_{z}\sum_{S_{1}\sqcup S_{2}\sqcup S_{3}= B}m(S_{1};\Lambda_{a,b,z}^{0})m(S_{2};\Lambda_{a,z,c}^{0})m(S_{3};\Lambda_{z,b,c}^{0})
\end{equation}
where we write $S_{1}\sqcup S_{2}\sqcup S_{3}= B$ to mean that $S_{1},S_{2},S_{3}$ partition $B$. Furthermore, since the number of terms in $\tilde{\Lambda}^{1}$  is at least  $\frac{1}{2}N^{4}$ for sufficiently large $N$  we certainly have:

\begin{equation}
\label{eq2}
P(B;\tilde{\Lambda}^{1})\leq \frac{2}{N}\sum_{z}\sum_{S_{1}\sqcup S_{2}\sqcup S_{3}= B}P(S_{1};\Lambda_{a,b,z}^{0})P(S_{2};\Lambda_{a,z,c}^{0})P(S_{3};\Lambda_{z,b,c}^{0})
\end{equation}

\noindent Fix a partition $S_{1}\sqcup S_{2}\sqcup S_{3}= B$ and note that the number of all such partitions is bounded by a constant $C$ independent of $N$; $C=|B|^{6}\leq 9^{6}$ will do. \\

\noindent We look at $P(S_{1};\Lambda_{a,b,z}^{0})$ for a fixed $z\in \Z{N}$. Each monomial in $\Lambda_{a,b,z}^{0}$ is given by the triple $(x+a,x+b,x+z)$ where $x$ ranges over all values in $\Z{N}$. \\

\noindent It follows that
\begin{equation}
\begin{split}
\label{eq1}
P(S_{1};\Lambda_{a,b,z}^{0}) &\leq C'N^{-\lceil\frac{|S_{1}|}{2}\rceil} \quad  \textrm{if} \quad |S_{1}|<3 \\
         &\leq C'N^{-1} \qquad \; \; \;\textrm{if}  \quad |S_{1}|=3
\end{split}
\end{equation}
and similarly for  $P(S_{2};\Lambda_{a,z,c}^{0})$ and $P(S_{3};\Lambda_{z,b,c}^{0})$.\\

\begin{remark}
 The alert reader would have noticed that the above is a rather convoluted way of saying  that $P(S_{1};\Lambda_{a,b,z}^{0})\leq CN^{-1}$ provided $S_{1}\neq \emptyset$. The reason for presenting it in this way is to make the arguments analogous to those in the general case.\\
\end{remark}
\noindent We split the sum in $\ref{eq2}$ into two parts and consider each case individually:\\

\noindent \case{1} $|S_{j}|<3$ for all $j=1,2,3$ : \\

\noindent Using the inequality in $\ref{eq1}$ on the expression $\ref{eq2}$:
\begin{equation*}
\begin{split}
P(B;\tilde{\Lambda}^{1})&\leq \frac{1}{N}\sum_{z}\sum_{S_{1}\sqcup S_{2}\sqcup S_{3}= B}C'N^{-\lceil\frac{|S_{1}|}{2}\rceil} C'N^{-\lceil\frac{|S_{2}|}{2}\rceil} C'N^{-\lceil\frac{|S_{3}|}{2}\rceil}\\
&\leq \sum_{S_{1}\sqcup S_{2}\sqcup S_{3}= B}(C')^{3}N^{-\lceil\frac{|S_{1}|+|S_{2}|+|S_{3}|}{2}\rceil}\\
&\leq C(C')^{3}N^{-\lceil\frac{|B|}{2}\rceil}
\end{split}
\end{equation*}
where the second inequality comes from the fact that there are at most $C$ partitions of $B$ as $S_{1}\sqcup S_{2}\sqcup S_{3}$. \\

\noindent \case{2} $|S_{j}|=3$ for some $j=1,2,3$:\\

\noindent Without loss of generality assume that $|S_{1}|=3$.\\

\begin{claim} $P(S_{1};\Lambda_{a,b,z}^{0})= 0$
for all but at most $6$ values of $z \in \Z{n}$.\\

\noindent Let $S_{1}=\{y_{1},y_{2},y_{3}\}$. Write $\Lambda_{a,b,z}^{0}=\sum_{x}(x+a,x+b,x+z)$ and suppose we decide to put the assignment
\begin{flalign*}
x+a &= y_{1}\\
x+b &= y_{2}\\
x+z &= y_{3}
\end{flalign*}
Then $x=y_{1}-a$ which implies $z=y_{3}+a-y_{1}$. Thus for any given assignment $z$ is fully determined and there  is a total of $3!=6$ possible assignments. The claim follows.\\
\end{claim}

\noindent Hence from $\ref{eq2}$ again:
\begin{equation*}
P(B;\tilde{\Lambda}^{1})\leq \frac{6}{N}\sum_{S_{1}\sqcup S_{2}\sqcup S_{3}= B}(C'N^{-1})P(S_{2};\Lambda_{a,z,c}^{0})P(S_{3};\Lambda_{z,b,c}^{0})
\end{equation*}
Now  if $|S_{2}|=|S_{3}|=3$ then 
\begin{equation*}
P(B;\tilde{\Lambda}^{1})\leq \frac{6}{N}\sum_{S_{1}\sqcup S_{2}\sqcup S_{3}= B}(C'N^{-1})^{3}\leq 6C(C')^{3}N^{-4}
\end{equation*}
which covers the case $|B|=9$.\\ 

\noindent Otherwise without loss of generality we may assume that $|S_{3}|<3$ which implies $P(S_{2};\Lambda_{a,z,c}^{0})P(S_{3};\Lambda_{z,b,c}^{0})\leq (C')^{2}N^{-\lceil\frac{|S_{2}|}{2}\rceil-\lceil\frac{|S_{3}|}{2}\rceil+1}$ . Thus

\begin{equation*}
\begin{split}
P(B;\tilde{\Lambda}^{1})&\leq\sum_{S_{1}\sqcup S_{2}\sqcup S_{3}= B} 6(C')^{3}N^{-\lceil\frac{|S_{2}|}{2}\rceil-\lceil\frac{|S_{3}|}{2}\rceil-1}\\
&\leq 6C(C')^{3}N^{-\lceil\frac{|S_{1}|+|S_{2}|+|S_{3}|}{2}\rceil}
\end{split}
\end{equation*}
as required.\\
\end{proof}
\begin{proof}[Proof of Theorem $\ref{base}$]
\noindent  Once we have proven Proposition $\ref{dist4}$ it is an easy task to compute $\exn{j}{\tilde{\Lambda}^{1}}$:
\begin{equation*}
\exn{}{\partial_{B}\tilde{\Lambda}^{1}}\leq CN^{4}P(B;\tilde{\Lambda}^{1})p^{9-|B|}\leq CN^{4-\lceil\frac{B}{2}\rceil}p^{9-|B|}
\end{equation*}
and therefore, since $Np^{2}\gg 1$:

\begin{align*}
\exn{0}{\tilde{\Lambda}^{1}}&\leq CN^{4}p^{9}\\
&\cdots\\
\exn{j}{\tilde{\Lambda}^{1}}&\leq CN^{4}p^{9}(Np^{2})^{-\lceil\frac{j}{2}\rceil}\\
&\cdots\\
\exn{9}{\tilde{\Lambda}^{1}}&\leq C\\
\end{align*}

\noindent We are now well placed to apply Vu's Theorem. Setting $\mathcal{F}_{0}=CN^{4}p^{9}$,\; $\mathcal{F}_{j+1}=N^{-1/18}\mathcal{F}_{j}$
and $\lambda= cN^{1/18}$ where the constants are $c$ and $C$  are appropriately chosen the conditions of Vu's theorem are satisfied and
$$\prob{|\tilde{\Lambda}_{a,b,c}^{1}-\exn{}{\tilde{\Lambda}_{a,b,c}^{1}}|> \epsilon \exn{}{\tilde{\Lambda}^{1}_{a,b,c}}}\leq Ce^{-C^{-1}N^{1/18}}$$

\noindent Moreover
$$\prob{\Lambda_{a,b,c}^{1}>0\; \text{for all}\; (a,b,c)}\leq 1-N^{3}Ce^{-C^{-1}N^{1/18}}=1-o_{N}(1)$$ 
\end{proof}
\section{The general case}
Most of the arguments above can be extended to the proof of Theorem $\ref{main2}$, the main obstacle being  that in the general case we cannot simply expand out $\Lambda_{a,b,c}^{i}$ to see what it looks like and extract the properties we are interested in. However we can go around this difficulty by exploiting the recursive nature of this polynomial which is very amenable to inductive arguments. \\

\begin{proposition}  
\label{map}
For each $i\geq 0$ there exists a linear map $\psi^{i}: \Z{N}^{d+3} \to \Z{N}^{2d+1}$ such that 
$$\Lambda^{i}_{a,b,c}= \sum_{{\bold v}\in \Z{N}^{d}}t_{\psi^{i}((a,b,c)\oplus\bold v)}$$
where  $2d+1=deg(\Lambda_{a,b,c}^{i})= 3^{i+1}$\\
\noindent Furthermore we may choose $\psi^{i+1}$ in such way that for $\bold v_{1},\bold v_{2},\bold v_{3} \in \Z{N}^{d}$ and $z \in \Z{N}$
\begin{equation}
\label{eq1}
(a,b,c)\oplus\bold v_{1}\oplus \bold v_{2} \oplus \bold v_{3} \oplus z \longmapsto \psi_{a,b,z}^{i}(\bold v_{1})\oplus \psi_{a,z,c}^{i}(\bold v_{2})\oplus\psi_{z,b,c}^{i}(\bold v_{3})
\end{equation}
\end{proposition}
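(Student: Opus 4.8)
The plan is to prove Proposition~\ref{map} by induction on $i$, constructing the maps $\psi^{i}$ explicitly from the recursion $\Lambda^{i+1}_{a,b,c}=\sum_{z}\Lambda^{i}_{a,b,z}\Lambda^{i}_{a,z,c}\Lambda^{i}_{z,b,c}$ so that equation~\eqref{eq1} holds by construction. For the base case $i=0$ we have $d=1$ (so $2d+1=3=3^{1}$), and the map $\psi^{0}:\Z{N}^{4}\to\Z{N}^{3}$ is simply $(a,b,c)\oplus x \mapsto (x+a,x+b,x+c)$, which is linear and recovers $\Lambda^{0}_{a,b,c}=\sum_{x}t_{\psi^{0}((a,b,c)\oplus x)}$ directly from the definition.

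For the inductive step, suppose $\psi^{i}:\Z{N}^{d_{i}+3}\to\Z{N}^{2d_{i}+1}$ has been constructed with $2d_{i}+1=3^{i+1}$. Set $d_{i+1}=3d_{i}+1$; then $2d_{i+1}+1=6d_{i}+3=3(2d_{i}+1)=3^{i+2}$ as required, and $\Z{N}^{d_{i+1}}$ decomposes canonically as $\Z{N}^{d_{i}}\oplus\Z{N}^{d_{i}}\oplus\Z{N}^{d_{i}}\oplus\Z{N}$, with a vector written $\bold v_{1}\oplus\bold v_{2}\oplus\bold v_{3}\oplus z$. Define $\psi^{i+1}$ on $\Z{N}^{d_{i+1}+3}$ by
\begin{equation*}
(a,b,c)\oplus\bold v_{1}\oplus\bold v_{2}\oplus\bold v_{3}\oplus z \;\longmapsto\; \psi^{i}_{a,b,z}(\bold v_{1})\oplus\psi^{i}_{a,z,c}(\bold v_{2})\oplus\psi^{i}_{z,b,c}(\bold v_{3}),
\end{equation*}
where $\psi^{i}_{a,b,z}(\bold v)$ denotes $\psi^{i}((a,b,z)\oplus\bold v)$; this is linear since it is a direct sum of compositions of the linear map $\psi^{i}$ with the (linear) coordinate permutations/substitutions that send $(a,b,c,z)$ to each of $(a,b,z)$, $(a,z,c)$, $(z,b,c)$. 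Then one computes
\begin{align*}
\sum_{\bold v\in\Z{N}^{d_{i+1}}} t_{\psi^{i+1}((a,b,c)\oplus\bold v)} &= \sum_{z}\sum_{\bold v_{1},\bold v_{2},\bold v_{3}} t_{\psi^{i}_{a,b,z}(\bold v_{1})}\, t_{\psi^{i}_{a,z,c}(\bold v_{2})}\, t_{\psi^{i}_{z,b,c}(\bold v_{3})}\\
&= \sum_{z}\Lambda^{i}_{a,b,z}\,\Lambda^{i}_{a,z,c}\,\Lambda^{i}_{z,b,c} \;=\; \Lambda^{i+1}_{a,b,c},
\end{align*}
using the inductive hypothesis applied three times with the appropriate triples in place of $(a,b,c)$, together with the fact that multiplication of the defining sums distributes over the index variables. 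One subtlety worth flagging: the identity $\Lambda^{i+1}=\sum_{\bold v} t_{\psi^{i+1}(\cdots)}$ is an identity of Boolean polynomials, and since the $t_{j}$ are $\{0,1\}$-valued it holds even though the map $\psi^{i+1}$ need not be injective and the reduced form of $\Lambda^{i+1}$ may have many fewer monomials than $N^{d_{i+1}}$ — so the statement is really about the polynomial as a function of the Boolean variables, not about a bijection between index tuples and monomials.

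I do not expect a serious obstacle here: the proposition is essentially a bookkeeping lemma that packages the recursion of the $\Lambda^{i}$ into a single linear parametrisation, and the only thing requiring care is tracking the dimension count $d_{i+1}=3d_{i}+1$ and verifying $2d_{i}+1=3^{i+1}$, together with being precise that $\psi^{i}_{a,b,z}$ genuinely is obtained from $\psi^{i}$ by a linear change of the first three coordinates (this is where one uses that $\psi^{i}$ was built uniformly in its $(a,b,c)$-arguments, which is immediate from the construction). The mild point to be careful about is that $\psi^{i}$ depends on $i$ in both its domain and codomain dimensions, so the induction hypothesis must be stated with $d=d_{i}$ explicitly, and one should note at the outset that the claimed degree $3^{i+1}$ is exactly $\deg\Lambda^{i}_{a,b,c}$, which follows by the same induction since each step triples the degree.
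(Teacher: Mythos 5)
Your proof is correct and follows essentially the same route as the paper: induction on $i$, defining $\psi^{i+1}$ via the formula in \eqref{eq1}, and verifying that the resulting sum reproduces the recursion $\Lambda^{i+1}_{a,b,c}=\sum_{z}\Lambda^{i}_{a,b,z}\Lambda^{i}_{a,z,c}\Lambda^{i}_{z,b,c}$. The only cosmetic differences are that you ground the induction at $i=0$ (which is cleaner, since the proposition is stated for all $i\geq 0$, whereas the paper anchors at $i=1$ by pointing to the explicit expansion of $\Lambda^{1}$ from the previous section), you make the dimension bookkeeping $d_{i+1}=3d_{i}+1$, $2d_{i+1}+1=3^{i+2}$ explicit, and you argue linearity of $\psi^{i+1}$ conceptually as a direct sum of compositions of $\psi^{i}$ with linear coordinate substitutions, where the paper instead verifies additivity component-by-component; these are the same argument presented with different emphasis.
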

\noindent For the sake of convenience we shall use $\psi^{i}_{a,b,c}(\bold v)$ to denote $\psi^{i}((a,b,c)\oplus\bold v)$ \\

\begin{proof}
We argue by induction on $i$.  We have already seen in the previous section that this holds in the case $i=1$ so assume that the result holds for $i$. 
%In fact we will assume the stronger hypothesis that
%we also have linearity within the subscripts in the sense that that:
%\begin{align*}
%\psi^{i}_{a+a',b+b',c+c'}(\bold v +\bold v')=\psi^{i}_{a,b,c}(\bold v)+\psi^{i}_{a',b',c'}(\bold v')
%\end{align*}
%for any $\bold v,\bold v'\in \Z{N}^{d}$.\\
%\begin{remark}
%One could define $\psi^{i}_{a,b,c}(\bold v)$ as a proper linear function from $\Z{N}^{d+3}$ to $\Z{N}^{2d+1}$ where the first $3$ coordinates are given by triple $(a,b,c)$. The subscript notation makes the computations a bit awkward but it helps remembering which polynomial we are encoding.\\
%\end{remark}

\noindent By the induction hypothesis:
\begin{equation*}
\begin{split}
\Lambda^{i+1}_{a,b,c}&=\sum_{z}\Lambda_{a,b,z}^{i}\Lambda_{a,z,c}^{i}\Lambda_{z,b,c}^{i}\\
&=\sum_{z}\Big (\sum_{{\bold v_{1}}\in \Z{N}^{d}}\psi^{i}_{a,b,z}(\bold v_{1}) \Big )\Big (\sum_{{\bold v_{2}}\in \Z{N}^{d}}\psi^{i}_{a,z,c}(\bold v_{2}) \Big )\Big (\sum_{{\bold v_{3}}\in \Z{N}^{d}}\psi^{i}_{z,b,c}(\bold v_{3}) \Big )\\
&=\sum_{\substack{\bold v_{1},\bold v_{2},\bold v_{3} 
\in \Z{N}^{d}\\ z\in \Z{N}}}\psi^{i}_{a,b,z}(\bold v_{1})\psi^{i}_{a,z,c}(\bold v_{2})\psi^{i}_{z,b,c}(\bold v_{3}) 
\end{split}
\end{equation*}
Thus taking $\psi^{i+1}_{a,b,c}$ to be as in equation \eqref{eq1} will do. All that remains to show that it is linear:
\begin{multline*}
\psi^{i+1}_{a+a',b+b',c+c'}(\bold v_{1} +\bold v'_{1}\oplus \bold v_{2}+\bold v'_{2} \oplus \bold v_{3}+\bold v'_{3} \oplus z + z' )=\\
\shoveleft  \psi_{a+a',b+b',z+z'}^{i}(\bold v_{1}+ \bold v'_{1})\oplus \psi_{a+a',z+z',cc+c'}^{i}(\bold v_{2}+\bold v'_{2})\oplus\psi_{z+z',b+b',c+c'}^{i}(\bold v_{3}+\bold v_{3})
 \end{multline*}
Now looking at each component individually;
\begin{equation*}
\psi_{a+a',b+b',z+z'}^{i}(\bold v_{1}+ \bold v'_{1})= \psi^{i}_{a,b,c}(\bold v_{1})+\psi^{i}_{a',b',c'}(\bold v'_{1})
\end{equation*}
by the induction hypothesis and similarly for $\bold v_{2}, \bold v_{3}$. Hence the above is the same as
\begin{equation*}
\psi_{a,b,z}^{i}(\bold v_{1})\oplus \psi_{a,z,c}^{i}(\bold v_{2})\oplus\psi_{z,b,c}^{i}(\bold v_{3})+\psi_{a',b',z'}^{i}(\bold v'_{1})\oplus \psi_{a',z',c'}^{i}(\bold v'_{2})\oplus\psi_{z',b',c'}^{i}(\bold v'_{3})
\end{equation*}
and hence $\psi^{i+1}$ is linear.

%\noindent \emph{Injectivity:}
%Again we will assume a stronger form of the hypothesis, namely:
%\begin{align*}
%\psi^{i}_{a,b,z}(\bold v)= \psi^{i}_{a,b,z'}(\bold v')&& \text{if and only if}&& \bold v=\bold v' \; \text{and}\; z=z' 
%\end{align*}
%and similarly for $a$ and $b$. \\

%Suppose that $\psi^{i+1}_{a,b,c}(\bold v_{1}\oplus \bold v_{2}\oplus \bold v_{3}\oplus z)=\psi^{i+1}_{a,b,c}(\bold v'_{1}\oplus \bold v'_{2}\oplus \bold v'_{3}\oplus z')$. Looking at the first component:
%\begin{equation*}
%\psi^{i}_{a,b,z}(\bold v_{1})=\psi^{i}_{a,b,z}(\bold v'_{1})
%\end{equation*}
%Therefore by the induction hypothesis $\bold v_{1}=\bold v'_{1}$ and $z=z'$. The same applies to the two remaining components and hence we are done.\\
\end{proof}

As in the previous section, we cannot  hope to show that the polynomial $\Lambda^{i}_{a,b,c}$ is strongly concentrated around its expectation because of the presence of `noise terms'. We need introduce an analogous notion of degeneracy that gets rid of these terms whilst keeping the expectation of the right order. Before we do so however we need to prove one more property of $\psi^{i}_{a,b,c}$.
\begin{proposition}
\label{form}
Let us write $\psi^{i}:\Z{N}^{d+3}\to \Z{N}^{2d+1}$ as $(L_{1},L_{2},\ldots,L_{2d+1})$ where $L_{j}: \Z{N}^{d+3}\to \Z{N}$ are linear forms. Then for $\bold v = (a,b,c)\oplus (v_{1},v_{2},\ldots, v_{d})$ 
$$L_{j}(\bold v)= x_{j}+y_{j}$$ 
for $x_{j} \in \{a,b,c,v_{1}\ldots, v_{d}\}$ and $y_{j}\in \{v_{1},\ldots,v_{d}\}$. Furthermore $x_{j}\neq y_{j}$ and $L_{j}\neq L_{j'}$ for $j\neq j'$.
\end{proposition}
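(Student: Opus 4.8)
The plan is to prove Proposition~\ref{form} by induction on $i$, running the induction in parallel with the construction of $\psi^{i}$ from Proposition~\ref{map}, since the recursive formula \eqref{eq1} is exactly what lets us track the shape of each linear form. For the base case $i=1$ one simply reads off the nine linear forms from the explicit map $\psi^{1}_{a,b,c}(x_{1},x_{2},x_{3},z) = (x_{1}+a,x_{1}+b,x_{1}+z,x_{2}+a,x_{2}+z,x_{2}+c,x_{3}+z,x_{3}+b,x_{3}+c)$; here $d=3$, the ``inner'' variables are $v_{1}=x_{1},v_{2}=x_{2},v_{3}=z$ (or rather one must be slightly careful about which of the four parameters $x_{1},x_{2},x_{3},z$ play the role of the $v$'s versus which we think of as boundary data, but in any event each of the nine forms is a sum $x_{j}+y_{j}$ with $y_{j}$ an inner variable, $x_{j}\neq y_{j}$, and all nine forms distinct), so the claim holds in this case. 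One should double-check the bookkeeping that $d+3$ really does account for $a,b,c$ plus $d$ free variables, matching $2\cdot 3+1 = 9 = 3^{2}$.

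For the inductive step, assume the statement for $i$ and recall from \eqref{eq1} that $\psi^{i+1}_{a,b,c}(\bold v_{1}\oplus\bold v_{2}\oplus\bold v_{3}\oplus z) = \psi^{i}_{a,b,z}(\bold v_{1})\oplus\psi^{i}_{a,z,c}(\bold v_{2})\oplus\psi^{i}_{z,b,c}(\bold v_{3})$. So the $2(3d+1)+1 = 3(2d+1)$ linear forms of $\psi^{i+1}$ are just the three blocks of $2d+1$ forms coming from the three copies of $\psi^{i}$, with boundary triples $(a,b,z)$, $(a,z,c)$, $(z,b,c)$ respectively and inner variable-blocks $\bold v_{1},\bold v_{2},\bold v_{3}$ respectively; the new ``inner'' variable set for $\psi^{i+1}$ is the disjoint union of the three old inner sets together with the single new variable $z$. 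By the induction hypothesis, each form in block one has the shape $x+y$ with $x\in\{a,b,z\}\cup\bold v_{1}$ and $y\in\bold v_{1}$, $x\neq y$; but $\{a,b,z\}\cup\bold v_{1}$ and $\bold v_{1}$ are both contained in $\{a,b,c\}\cup(\bold v_{1}\oplus\bold v_{2}\oplus\bold v_{3}\oplus z)$ and in the new inner set respectively (note $z$ is now an inner variable of $\psi^{i+1}$, which is precisely why the hypothesis permits $x_j$ to range over $\{v_1,\ldots,v_d\}$ and not just $\{a,b,c\}$), so the shape claim is preserved; similarly for blocks two and three.

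It remains to check the two ``furthermore'' assertions for $\psi^{i+1}$: that $x_{j}\neq y_{j}$ in each form, and that all $3(2d+1)$ forms are pairwise distinct. The inequality $x_{j}\neq y_{j}$ is immediate from the induction hypothesis applied within each block, since substituting the boundary triple of a block does not identify any variables. For pairwise distinctness: two forms lying in the same block are distinct by the induction hypothesis; the content of the step is that a form from block one can never equal a form from block two or three. This is the main obstacle and the only place where real care is needed. The point is that a form from block one is a sum of two variables drawn from $\{a,b,z\}\cup\bold v_{1}$, with the summand that lies in $\bold v_{1}$ genuinely appearing (it cannot collapse, by $x_j \ne y_j$); since $\bold v_{1}$, $\bold v_{2}$, $\bold v_{3}$ are disjoint, a block-one form involves a variable from $\bold v_{1}$ that no block-two or block-three form can involve, unless that block-one form is of the form $a+z$ or $b+z$ with \emph{both} summands in $\{a,b,c,z\}$ --- but that is exactly excluded because the induction hypothesis forces $y_{j}\in\bold v_{1}$. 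Thus the inner-variable ``fingerprint'' $y_{j}\in\bold v_{1}\sqcup\bold v_{2}\sqcup\bold v_{3}$ separates the three blocks, and distinctness within a block is inductive, completing the proof. I would present this fingerprint argument carefully, as it is the crux, and note that it also re-confirms $\psi^{i+1}$ has the advertised image dimension $2d'+1$ with $d' = 3d+1$ and $2d'+1 = 3(2d+1) = 3^{i+2}$.
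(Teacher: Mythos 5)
Your proof is correct and follows essentially the same approach as the paper's: induction on $i$, reading off the form $L_j = x_j + y_j$ from the block decomposition $\psi^{i+1}_{a,b,c} = \psi^{i}_{a,b,z}\oplus\psi^{i}_{a,z,c}\oplus\psi^{i}_{z,b,c}$, and using the fact that $y_j$ must lie in the block's inner variable set (disjoint from the other blocks' variables and from the boundary triple) to rule out cross-block coincidences. Your ``fingerprint'' phrasing makes explicit what the paper compresses into the remark that a cross-block equality would force $L_j$ to be a sum of two boundary variables, contradicting the inductive constraint $y_j\in\bold v_1$.
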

\begin{proof}
Unsurprisingly we argue by induction on $i$. Set
\begin{flalign*}
(L_{1},L_{2},\ldots,L_{2d+1})&=\psi^{i}((a,b,z)\oplus \bold v_{1})\\
(L'_{1},L'_{2},\ldots,L'_{2d+1})&=\psi^{i}((a,z,c)\oplus \bold v_{2})\\
(L''_{1},L''_{2},\ldots,L''_{2d+1})&=\psi^{i}((z,b,c)\oplus \bold v_{1})
\end{flalign*}
Then
\begin{equation*}
\psi_{a,b,c}^{i+1}(\bold v_{1}\oplus\bold v_{2}\oplus\bold v_{3}\oplus z)=(L_{1},\ldots,L_{2d+1})\oplus(L'_{1},\ldots,L'_{2d+1})\oplus(L''_{1},\ldots,L''_{2d+1})
\end{equation*}
thus, since each of the linear maps in each component has the desired form by induction, we are done.

Also by induction we cannot have that $\{L_{j}: 1\leq j\leq 2d+1 \}$ are distinct and similarly for the $L'_{j}$ and $L''_{j}$. The only possible equalities are therefore of the form $L_{j}=L'_{j'}$ for instance but this can only happen if $L_{j}$ is one of $a+b$, $b+c$ or $a+c$ and this is not possible by the induction hypothesis.\\
\end{proof}
\noindent We can now define our notion of degeneracy:\\

\begin{definition}
\emph{A $d$-tuple $\bold v = (v_{1},v_{2},\ldots, v_{d})$  is degenerate if  it satisfies any non trivial relation $r(v_{1},\ldots,v_{2},\ldots,v_{d},a,b,c)$  of length at most 4. }\\
\end{definition}

\noindent Let $\mathcal{H}^{i}_{a,b,c}\subset \Z{N}^{d}$ be the set of all non-degenerate $d$-tuples and set 

\begin{align*}
\tilde{\Lambda}^{i}_{a,b,c}:= \sum_{{\bold v}\in \mathcal{H}^{i}_{a,b,c}}\psi^{i}_{a,b,c}(\bold v)
\end{align*}

The number of such relations is bounded by an absolute constant $C< 8(d+3)^{4}$ independent of $N$ and the number of $d$-tuples satisfying  a fixed relation $r$ is $O(N^{3d})$ . Therefore $|\mathcal{H}_{i}|=\Omega(N^{d})$. Furthermore it follows from Proposition $\ref{form}$ that for all $\bold v \in \mathcal{H}_{i}$, all $2d+1$ coordinates of $\psi^{i}_{a,b,c}(\bold v)$ are distinct. Hence
\begin{equation*}
\exn{}{\tilde{\Lambda}^{i}_{a,b,c}}\geq CN^{d}p^{2d+1}
 \end{equation*}

\begin{proposition}
\label{dist3}
Given any $B \subset [N]$ with $|B|\leq 2d+1$ we have that

\begin{equation}
\begin{split}
P(B;\tilde{\Lambda}^{i}_{a,b,c}) &\leq CN^{-\lceil\frac{|B|}{2}\rceil} \quad \quad \textrm{if} \quad |B|<2d+1 \\
         &\leq CN^{-d} \qquad \quad \; \; \textrm{if}  \quad |B|=2d+1
\end{split}
\end{equation}
where we recall the probability measure $P$ is defined as $$P(B;\tilde{\Lambda}^{i}_{a,b,c}):=\frac{1}{m(\emptyset;\tilde{\Lambda}^{i}_{a,b,c})}m(B;\tilde{\Lambda}^{i}_{a,b,c})$$
\end{proposition}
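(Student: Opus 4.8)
The plan is to argue by induction on $i$, following the same template as the proof of Proposition~\ref{dist4} but with the recursive structure of $\psi^{i}$ replacing the explicit $9$-tuple expansion. The base case $i=1$ is precisely Proposition~\ref{dist4}, so assume the bound holds for $i$ and consider $\tilde{\Lambda}^{i+1}_{a,b,c}$. As in the base case, I would first observe that $\tilde{\Lambda}^{i+1}_{a,b,c}\subset\Lambda^{i+1}_{a,b,c}=\sum_{z}\Lambda^{i}_{a,b,z}\Lambda^{i}_{a,z,c}\Lambda^{i}_{z,b,c}$, so every monomial of $\tilde{\Lambda}^{i+1}_{a,b,c}$ is a product of one monomial from each of the three factors. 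This yields the counting inequality
\begin{equation*}
m(B;\tilde{\Lambda}^{i+1}_{a,b,c})\leq \sum_{z}\sum_{S_{1}\sqcup S_{2}\sqcup S_{3}=B} m(S_{1};\Lambda^{i}_{a,b,z})\,m(S_{2};\Lambda^{i}_{a,z,c})\,m(S_{3};\Lambda^{i}_{z,b,c}),
\end{equation*}
and since $m(\emptyset;\tilde{\Lambda}^{i+1}_{a,b,c})=\Omega(N^{d'})$ where $2d'+1=3^{i+2}$ and $d'=3d+1$, dividing through converts this into an inequality among the $P(\cdot\,;\cdot)$ quantities, with an extra factor of roughly $N^{-d'}\cdot N\cdot N^{3d}$-type bookkeeping coming from the sum over $z$ and the sizes of the index sets.

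The key point is that I cannot apply the inductive bound directly to $P(S_{j};\Lambda^{i}_{a,b,z})$, because $\Lambda^{i}_{a,b,z}$ is the \emph{un-pruned} polynomial, not $\tilde{\Lambda}^{i}_{a,b,z}$; a monomial of $\tilde{\Lambda}^{i+1}_{a,b,c}$ built from a degenerate sub-tuple of one factor is still possible as long as the \emph{global} tuple is non-degenerate. So the right statement to induct on is a slightly more robust one: for the sub-polynomial of $\Lambda^{i}_{a,b,z}$ consisting of monomials whose defining $d$-tuple satisfies no non-trivial relation of length $\le 4$ \emph{in the variables $a,b,z$ together with the tuple entries}, the same $P(S;\cdot)\le CN^{-\lceil|S|/2\rceil}$ bound holds; and then one checks that non-degeneracy of the global $(3d+1)$-tuple for $\tilde{\Lambda}^{i+1}$ forces each of the three sub-tuples to be non-degenerate in this local sense. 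Granting this, the argument splits into cases exactly as in Proposition~\ref{dist4}: either all three $|S_{j}|$ are small, in which case $\lceil|S_{1}|/2\rceil+\lceil|S_{2}|/2\rceil+\lceil|S_{3}|/2\rceil\geq\lceil|B|/2\rceil$ handles it together with the $N^{-1}$ savings from the $z$-sum being collapsed by a determined assignment; or some $|S_{j}|$ equals the full degree $2d+1$, in which case — crucially — the value of $z$ (and indeed the whole sub-tuple) is determined up to an absolute constant number of choices, because a full-length monomial of $\tilde{\Lambda}^{i}$ pins down all $2d+1$ coordinates and hence, via the explicit linear form structure of Proposition~\ref{form}, forces $z$. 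This last observation is the exact analogue of the Claim inside the Case~2 argument, now applied at level $i$ rather than level $0$, and it is what lets the sum over $z$ not cost anything when one of the pieces is saturated.

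The main obstacle I expect is bookkeeping the exponents cleanly in the boundary cases — specifically, making sure that when one $|S_{j}|$ is maximal but the others are not, the combination of (a) the $N^{-d}$ (resp.\ $N^{-\lceil|S_{j}|/2\rceil}$) from the saturated factor, (b) the constant-many choices of $z$, and (c) the inductive bounds on the remaining two factors, adds up to exactly $N^{-\lceil|B|/2\rceil}$ (or $N^{-d'}$ when $|B|=2d'+1$) rather than leaking a half-power. The parity of $|B|$ versus $|S_{1}|+|S_{2}|+|S_{3}|$ and the ceilings is where one has to be careful, and it is worth isolating the arithmetic fact $\sum_j\lceil|S_j|/2\rceil\ge\lceil(\sum_j|S_j|)/2\rceil$ and tracking when it is strict. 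Once Proposition~\ref{dist3} is in hand, the passage to $\exn{j}{\tilde{\Lambda}^{i}}\le CN^{d}p^{2d+1}(Np^2)^{-\lceil j/2\rceil}$ and the application of Vu's Theorem (Theorem~\ref{Vu}) with $\mathcal{F}_{j+1}=N^{-\delta}\mathcal{F}_{j}$ for a suitable $\delta=\delta(i)>0$ and $\lambda=cN^{\delta}$ goes through verbatim as in the base case, and a union bound over the $O(N^3)$ triples $(a,b,c)$ finishes Theorem~\ref{main2}, choosing $i=i(\epsilon)$ large enough that $d/(2d+1)\cdot$(the exponent gap) beats the constraint imposed by $p=N^{-1/2+\epsilon}$.
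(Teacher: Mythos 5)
Your approach is essentially the paper's: induct on $i$ via the containment $\tilde{\Lambda}^{i+1}_{a,b,c}\subseteq\sum_z\tilde{\Lambda}^i_{a,b,z}\,\tilde{\Lambda}^i_{a,z,c}\,\tilde{\Lambda}^i_{z,b,c}$, split by which $|S_j|$ are saturated, and use that $\bold v\oplus z\mapsto\psi^i_{a,b,z}(\bold v)$ is injective (hence at most $(2d+1)!$ values of $z$ are compatible with a fixed full-length $S_j$) to collapse the $z$-sum. One thing to correct in your write-up: you assert that a monomial of $\tilde{\Lambda}^{i+1}_{a,b,c}$ ``built from a degenerate sub-tuple of one factor is still possible as long as the global tuple is non-degenerate.'' This has the implication backwards and is in fact false: any length-$\le 4$ relation among $v_{1,1},\ldots,v_{1,d},a,b,z$ is automatically a length-$\le 4$ relation in the full $(3d+1)$-tuple together with $a,b,c$, since $z$ is one of that tuple's coordinates; hence a degenerate sub-tuple forces a degenerate global tuple, which is exactly the inclusion $\mathcal{H}^{i+1}_{a,b,c}\subseteq\mathcal{H}^{i}_{a,b,z}\oplus\mathcal{H}^{i}_{a,z,c}\oplus\mathcal{H}^{i}_{z,b,c}$ the paper uses to apply the inductive bound to $\tilde{\Lambda}^i$ directly. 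Your next sentence states the correct direction, and the ``slightly more robust'' statement you propose to induct on is just the proposition itself at level $i$ (not a genuine strengthening), so the argument is sound once the earlier claim is struck.
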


\begin{proof}
We will argue by induction on $i$. The case $i=1$ was handled in the previous section so assume that the proposition holds for $i$. 
Note that if $\bold v = \bold v_{1}\oplus \bold v_{2} \oplus \bold v_{3}\oplus z$ is in $\mathcal{H}^{i+1}_{a,b,c}$ then we must have that $\bold v_{1} \in \mathcal{H}^{i}_{a,b,z}$, $\bold v_{2}\in\mathcal{H}^{i}_{a,z,c}$ and $\bold v_{3}\in\mathcal{H}^{i}_{z,b,c}$, i.e
\begin{equation*}
\mathcal{H}^{i+1}_{a,b,c}\subseteq \mathcal{H}^{i}_{a,b,z}\oplus  \mathcal{H}^{i}_{a,z,c}\oplus \mathcal{H}^{i}_{z,b,c}
\end{equation*}
Therefore
\begin{equation}
\tilde{\Lambda}^{i}_{a,b,c} \subseteq \sum_{z}\tilde{\Lambda}^{i-1}_{a,b,z}\cdot \tilde{\Lambda}^{i-1}_{a,z,c}\cdot \tilde{\Lambda}^{i-1}_{z,b,c}
\end{equation}\\
\noindent Now fix a partition $S_{1}\sqcup S_{2}\sqcup S_{3}= B$. Again the number of such partitions is also bounded by a constant $C$ independent of $N$, a conservative bound would be $C< |B|^{6}\leq (2d+1)^{6}$.
The clearly we have that 
\begin{equation*}
m(B,\tilde{\Lambda}^{i+1}_{a,b,c})\leq C\sum_{z}m(S_{1},\tilde{\Lambda}^{i}_{a,b,z})m(S_{2},\tilde{\Lambda}^{i}_{a,z,c})m(S_{3},\tilde{\Lambda}^{i}_{z,b,c})
\end{equation*}
and thus
\begin{equation}
\label{eq3}
P(B;\tilde{\Lambda}^{i+1}_{a,b,c}) \leq C\frac{1}{N}\sum_{z}P(S_{1};\tilde{\Lambda}^{i}_{a,b,z})P(S_{2};\tilde{\Lambda}^{i}_{a,z,c})P(S_{3};\tilde{\Lambda}^{i}_{z,b,c})
\end{equation}
We need to consider three cases:\\

\noindent \case{1} $|S_{j}|<k=2d+1$ for all $j=1,2,3$.

\noindent By the induction hypothesis $P(S_{j};\tilde{\Lambda}^{i-1})\leq  CN^{-\left \lceil\frac{|S_{j}|}{2}\right \rceil}$ and hence by \eqref{eq3}
\begin{equation}
\begin{split}
P(B;\tilde{\Lambda}^{i}_{a,b,c})&\leq CN^{-\left \lceil\frac{|S_{1}|}{2}\right \rceil}N^{-\left \lceil\frac{|S_{2}|}{2}\right \rceil}N^{-\left \lceil\frac{|S_{3}|}{2}\right \rceil}\\
&\leq CN^{-\left \lceil\frac{|B|}{2}\right \rceil}
\end{split}
\end{equation}
as desired.\\

\noindent \case{2} $|B|<2d+1$ and $|S_{j}|=k$ for some $j=1,2,3$\\

\noindent Without loss of generality we may assume that  $|S_{1}|= k$. \\

\begin{claim}
$P(S_{1};\tilde{\Lambda}^{i}_{a,b,z})=0$ for all but at most $k!$ values of $z$.
\end{claim}

\begin{proof}
It can be shown by induction that the map $\bold v\oplus z\mapsto \psi^{i}	_{a,b,z}(\bold v)$ is injective. Let $S=\{s_{1},s_{2},\ldots,s_{k}\}$, then for each $\pi: [k] \to [k]$ a permutation, the equation 
$$\psi_{a,b,z}(\bold v)= (s_{\pi(1)},s_{\pi(2)},\ldots,s_{\pi(k)})$$
has at most $1$ solution, in particular at most one possible value of $z$. There is  a total $k!$ permutations and hence the claim follows. 
\end{proof}
\noindent By the induction hypothesis $P(S_{1};\tilde{\Lambda}^{i}_{a,b,z})\leq CN^{\frac{k-1}{2}}$an so looking back at equation \eqref{eq3} we have:
$$P(B;\tilde{\Lambda}^{i+1}_{a,b,c}) \leq CN^{\frac{-k+1}{2}-1}P(S_{2};\tilde{\Lambda}^{i}_{a,z,c})P(S_{3};\tilde{\Lambda}^{i}_{z,b,c})$$
On the other hand, since one of $|S_{2}|$,$|S_{3}|$ is smaller than $k$ we certainly have the inequality $P(S_{2};\tilde{\Lambda}^{i}_{a,z,c})P(S_{3};\tilde{\Lambda}^{i}_{z,b,c})\leq CN^{-\left \lceil\frac{|S_{2}|}{2}\right \rceil-\left \lceil\frac{|S_{3}|}{2}\right \rceil+1}$
Substituting back in the above:
\begin{equation*}
\begin{split}
P(B;\tilde{\Lambda}^{i+1}_{a,b,c})&\leq CN^{\frac{-k+1}{2}}N^{-\left \lceil\frac{|S_{2}|+|S_{3}|}{2}\right \rceil}\\
&\leq CN^{-\left \lceil\frac{k+|S_{2}|+|S_{3}|}{2}\right \rceil}=CN^{-\left \lceil\frac{|B|}{2}\right \rceil}
\end{split}
\end{equation*}\\

\noindent \case{3} $|S_{1}|=|S_{2}|=|S_{3}|=k$. Then $|B|=3k=6d+3$ and the above yields
\begin{equation*}
P(B;\tilde{\Lambda}^{i+1}_{a,b,c})\leq CN^{-1}N^{-d}N^{-d}N^{-d}=CN^{-3d-1}
\end{equation*}
which completes the proof.\\
\end{proof}

\noindent We can now easily estimate $\exn{j}{\tilde{\Lambda}^{i}_{a,b,c}}$ :\\

\begin{proposition} Suppose that $Np^{2}\gg 1$, then for $j=1,2,\ldots, 2d$ there exists an absolute constant $C$ depending only on $d$ such that
\label{dist2} 
\begin{equation}
\exn{j}{\tilde{\Lambda}^{i}_{a,b,c}}\leq C(Np^{2})^{-\left \lceil\frac{j}{2}\right \rceil}N^{d}p^{2d +1}
\end{equation}
and $\exn{2d+1}{\tilde{\Lambda}^{i}_{a,b,c}}\leq C$.
\end{proposition}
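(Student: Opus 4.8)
The plan is to deduce this directly from Proposition \ref{dist3} together with the formula for $\exn{}{\partial_C P}$ in terms of the $m(C,l;P)$ quantities stated earlier in the excerpt. Recall that for a reduced positive Boolean polynomial $P$ with $m(\emptyset;P)$ terms, all of which have length exactly $k=2d+1$ (which is the case for $\tilde{\Lambda}^i_{a,b,c}$ by Proposition \ref{form}), and for any $B\subset[N]$ of size $|B|\le k$, we have $\exn{}{\partial_B \tilde{\Lambda}^i_{a,b,c}} = m(B;\tilde{\Lambda}^i_{a,b,c})\,p^{k-|B|}$.

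First I would record the bound $m(\emptyset;\tilde{\Lambda}^i_{a,b,c}) = |\mathcal{H}^i_{a,b,c}| \le N^d$, so that
\begin{equation*}
\exn{}{\partial_B \tilde{\Lambda}^i_{a,b,c}} = P(B;\tilde{\Lambda}^i_{a,b,c})\cdot m(\emptyset;\tilde{\Lambda}^i_{a,b,c})\cdot p^{k-|B|} \le P(B;\tilde{\Lambda}^i_{a,b,c})\, N^d\, p^{2d+1-|B|}.
\end{equation*}
Next, plugging in Proposition \ref{dist3}: for $|B|<2d+1$ this gives $\exn{}{\partial_B \tilde{\Lambda}^i_{a,b,c}} \le C N^{d-\lceil |B|/2\rceil} p^{2d+1-|B|}$, and for $|B|=2d+1$ it gives $\exn{}{\partial_B \tilde{\Lambda}^i_{a,b,c}} \le C N^{d-d} p^{0} = C$. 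Now recall $\exn{j}{P} = \max\{\exn{}{\partial_C P} : |C|\ge j\}$; so I need to show that among all $B$ with $|B|\ge j$, the largest bound is the one claimed. Writing $m=|B|$, the bound $C N^{d-\lceil m/2\rceil} p^{2d+1-m}$ as a function of $m$ equals $C N^d p^{2d+1} \cdot N^{-\lceil m/2\rceil} p^{-m}$, and since $Np^2\gg 1$ one checks $N^{-\lceil m/2\rceil}p^{-m}$ is decreasing in $m$: increasing $m$ by $1$ multiplies it by $p^{-1}$ (when $m$ goes from odd to even, no change in $\lceil m/2\rceil$) or by $N^{-1}p^{-1}=(Np)^{-1}\le (Np^2)^{-1}\cdot p \ll p$ — in all cases the product stays $\le 1$ in ratio since $p = o(1)$, hence the maximum over $|B|\ge j$ is attained at $|B|=j$. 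Therefore $\exn{j}{\tilde{\Lambda}^i_{a,b,c}} \le C N^d p^{2d+1} N^{-\lceil j/2\rceil} p^{-j}$; and since $p^{-j} = (Np^2)^{-j/2}\cdot N^{j/2} p^{j/2}\cdot p^{-j}\cdot\dots$ — more cleanly, $N^{-\lceil j/2\rceil}p^{-j} \le (Np^2)^{-\lceil j/2\rceil}$ because $p^{-j}\le p^{-2\lceil j/2\rceil}$, giving exactly $\exn{j}{\tilde{\Lambda}^i_{a,b,c}} \le C(Np^2)^{-\lceil j/2\rceil} N^d p^{2d+1}$. The case $j=2d+1$ follows from the $|B|=2d+1$ clause of Proposition \ref{dist3}, and one should also note it is consistent with (indeed stronger than) plugging $j=2d+1$ into the general formula since $(Np^2)^{-\lceil (2d+1)/2\rceil} N^d p^{2d+1} = (Np^2)^{-(d+1)}N^d p^{2d+1} = N^{-1}p^{-1} \le C$ as $Np\gg 1$.

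The argument is essentially bookkeeping once Proposition \ref{dist3} is in hand, so there is no serious obstacle; the one point requiring a little care is the monotonicity step — verifying that the worst case over $|B|\ge j$ really is $|B|=j$ and not some larger $B$ — which is exactly where the hypothesis $Np^2\gg 1$ (equivalently $p=\omega(N^{-1/2})$, consistent with the range in Theorem \ref{main2}) gets used, and where one must be careful about the ceiling function making the exponent of $N$ change only every second step while the exponent of $p$ changes every step.
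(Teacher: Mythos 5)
Your overall plan mirrors the paper's: observe $m(\emptyset;\tilde{\Lambda}^i_{a,b,c})=|\mathcal{H}^i_{a,b,c}|\le N^d$, write $\exn{}{\partial_B\tilde{\Lambda}^i}\le P(B;\tilde{\Lambda}^i)\,N^d p^{2d+1-|B|}$, and feed in Proposition~\ref{dist3}. That part is fine and matches what the paper does.

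The gap is in the monotonicity step. You assert that $N^{-\lceil m/2\rceil}p^{-m}$ is decreasing in $m$, arguing that increasing $m$ by $1$ multiplies it by either $p^{-1}$ or $(Np)^{-1}$ and ``in all cases the product stays $\le 1$ in ratio since $p=o(1)$.'' But $p^{-1}$ is the ratio when $m$ goes from odd to even (no change in $\lceil m/2\rceil$), and $p^{-1}\gg 1$, not $\le 1$. So $N^{-\lceil m/2\rceil}p^{-m}$ \emph{increases} on those steps; it is not monotone, and the claimed consequence --- that the maximum of $\exn{}{\partial_B}$ over $|B|\ge j$ is attained at $|B|=j$, giving $\exn{j}\le CN^dp^{2d+1}\cdot N^{-\lceil j/2\rceil}p^{-j}$ --- does not follow. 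Concretely, for $j$ odd the bound at $|B|=j+1$ is $p^{-1}$ times larger than your claimed intermediate bound at $|B|=j$.

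The fix is to reverse the order of your two operations. First apply the inequality you correctly identify, $N^{-\lceil m/2\rceil}p^{-m}\le(Np^2)^{-\lceil m/2\rceil}$ (valid since $-m\ge-2\lceil m/2\rceil$ and $p<1$), for \emph{each} $m=|B|$ individually, obtaining $\exn{}{\partial_B}\le C(Np^2)^{-\lceil m/2\rceil}N^dp^{2d+1}$. Then use that $(Np^2)^{-\lceil m/2\rceil}$ is non-increasing in $m$ because $Np^2\gg 1$ and $\lceil m/2\rceil$ is non-decreasing, so that the maximum over $m\ge j$ is bounded by the $m=j$ value. This is exactly what the paper's chain of inequalities does (it converts to the $(Np^2)$-form before invoking $Np^2\gg 1$), and it is a one-line repair of your write-up. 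One could also remark --- a shared imprecision with the paper rather than a defect of your argument --- that the statement as written cannot literally hold for $j$ close to $2d$, since $\exn{j}\ge\exn{2d+1}\ge1$ while $(Np^2)^{-\lceil j/2\rceil}N^dp^{2d+1}$ can be $o(1)$ there; what the proof actually establishes, and what the application to Theorem~\ref{main2} uses, is the per-$|B|$ estimate $\exn{}{\partial_B}\le C(Np^2)^{-\lceil|B|/2\rceil}N^dp^{2d+1}$ for $|B|\le 2d$ together with $\exn{}{\partial_B}\le C$ for $|B|=2d+1$.
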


\begin{proof}
Let $B\subset [N]$ such that $|B|=j< 2d+1$. By Proposition $\ref{dist3}$,
\begin{equation*}
\begin{split}
\exn{}{\partial_{B}\tilde{\Lambda}^{i}_{a,b,c}}&\leq CP(B;\tilde{\Lambda}^{i}_{a,b,c})N^{d}p^{2d+1-|B|}\\
&\leq CN^{-\lceil\frac{|B|}{2}\rceil}N^{d}p^{2d+1-|B|}\\
&\leq C(Np^{2})^{-\left \lceil\frac{|B|}{2}\right \rceil}N^{d}p^{2d +1}\\
&=C(Np^{2})^{-\left \lceil\frac{j}{2}\right \rceil}N^{d}p^{2d +1}\; .
\end{split}
\end{equation*}
If $|B|= 2d+1$ then $\exn{}{\partial_{B}\tilde{\Lambda}^{i}_{a,b,c}}\leq CP(B;\tilde{\Lambda}^{i}_{a,b,c})N^{d}\leq CN^{-d}N^{d}=C$
\end{proof}
\noindent Since the above inequality holds for any $B\subset [N]$ and $Np^{2}\gg 1$ equation \eqref{dist2} follows.\\

\noindent We are now ready to prove our main result:

\begin{proof}[Proof of Theorem $\ref{main2}$]

First we choose $i$ sufficiently large so that $3^{i+1}=2d+1=k\geq \frac{\epsilon +1}{2\epsilon}$. Then $\exn{}{\tilde{\Lambda}^i}= N^{d}p^{2d+1}=N^{\frac{k-1}{2}}p^k = N^{k\epsilon -1/2} \geq N^\epsilon$. \\
Also note that $Np^{2}=N^{\epsilon}$ and thus by Proposition $\ref{dist2}$ it follows that $\exn{j}{\tilde{\Lambda}^{i}}\leq C_k$ for $j\geq1$ for some absolute constant depending only on $k$.\\

\noindent Now we
apply Vu's Theorem $(\ref{Vu})$, setting $\lambda=N^{\frac{\epsilon}{2k}}$ and
\begin{align*}
\mathcal{F}_0=&C_{k}\exn{}{\tilde{\Lambda}^i}=C_kN^{\epsilon} \\
\mathcal{F}_{j+1}=& N^{-\epsilon/k}\mathcal{F}_{j}=C_kN^{\frac{k-j}{k}\epsilon}\;.\\
\end{align*}

Clearly $\mathcal{F}_j \geq \exn{j}{\tilde{\Lambda}^i}$ and, provided $N$ is large enough, $\mathcal{F}_j/\mathcal{F}_{j+1} =N^{\epsilon/k}\geq \lambda + 4j\log{N}$.
Hence by Vu's Theorem,
\begin{equation}
\prob{|\tilde{\Lambda}^{i}_{a,b,c}-\exn{}{\tilde{\Lambda}^{i}_{a,b,c}}|\geq c_k \sqrt{\lambda\mathcal{F}_0\mathcal{F}_1}}\leq d_ke^{-\frac{\lambda}{4}}
\end{equation}
where again $c_{k}$ and $d_{k}$ are constants dependent only on $k$. Now $ c_k \sqrt{\lambda\mathcal{F}_0\mathcal{F}_1}=C_{k}c_{k}N^{\epsilon(1-1/4k)}\leq N^{\epsilon(1-1/8k)}$ and $d_{k}e^{-\frac{\lambda}{4}}\leq e^{-N^{\epsilon/4k}}$ again provided $N$ is sufficiently large. \\

\noindent Thus,
\begin{equation}
\prob{|\tilde{\Lambda}^{i}_{a,b,c}- N^{\epsilon}| \geq N^{\epsilon(1 - 1/8k)} }\leq e^{-N^{\epsilon/4k}}
\end{equation}
provided $N$ is large enough.   
In particular it follows that $\prob{\tilde{\Lambda}^i = 0} \leq e^{-N^{\epsilon/4k}}$ and therefore:
\begin{align*}
\prob{ \tilde{\Lambda}^{i}_{a,b,c} > 0 \; \text{for all}\;(a,b,c)} \geq& 1-N^2e^{-N^{\epsilon/4k}}\\
=& 1-o_N(1)
\end{align*}
\end{proof}

\nocite{*}
\bibliography{Freimanhomprof}
\bibliographystyle{plain}

\end{document}